\documentclass[11pt,reqno]{amsart}
\usepackage[T1]{fontenc}
\usepackage[utf8]{inputenc}
\usepackage[english]{babel}
\usepackage[margin=1in, top=1in, bottom=1in]{geometry}
\usepackage{amsmath, amssymb, amsthm, mathtools}
\usepackage{tikz-cd}
\usepackage{enumitem}
\usepackage{hyperref}

\newtheorem{theorem}{Theorem}[section]
\newtheorem{lemma}[theorem]{Lemma}
\newtheorem{proposition}[theorem]{Proposition}

\theoremstyle{definition}
\newtheorem{definition}[theorem]{Definition}
\newtheorem{remark}[theorem]{Remark}
\newtheorem{example}[theorem]{Example}

\newtheorem{principle}{Principle}[section]

\newcommand{\N}{\mathbb{Z}_{\ge 0}}
\newcommand{\Z}{\mathbb{Z}}
\newcommand{\Q}{\mathbb{Q}}
\newcommand{\C}{\mathbb{C}}
\newcommand{\R}{\mathbb{R}}
\newcommand{\dd}{\mathrm{d}}
\newcommand{\ee}{\mathrm{e}}

\hypersetup{colorlinks=true, linkcolor=blue, citecolor=blue, urlcolor=blue}
\numberwithin{equation}{section}

\title[The holonomic triangle]{The holonomic triangle: from a symmetry between $\ee$ and $\pi$ to additive Gamma functions}
\author{Benoit Cloitre}
\date{\today}

\begin{document}
\maketitle

\begin{abstract}
Two linear recurrences exhibit mirror symmetry connecting the constants $\ee$ and $\pi$. When parametrized, their asymptotic connection constants extend to meromorphic functions satisfying additive functional equations with rational coefficients. We call such functions \emph{additive Gamma functions} (AGFs), recognizing Euler's $\Gamma(z)$ as the order-1 prototype. Our theory reveals a structural dichotomy: one AGF is expressible as Gamma ratios (regular case), another involves incomplete Gamma (irregular case).  AGFs complete a holonomic triangle between P-recursive sequences, additive functional equations, and differential equations, unifying discrete and continuous perspectives under the condition that Gamma factors in asymptotics have integer slopes.
\end{abstract}

\subjclass[2020]{Primary: 11B37, 33B15; Secondary: 39B32, 30E15}
\keywords{Additive functional equations, P-recursive sequences, Connection constants, Gamma function, Holonomic systems}

\section{Introduction}

\subsection*{Notation}

Throughout this paper, $\N := \{0,1,2,\ldots\}$ denotes the nonnegative integers, and $\Z$, $\Q$, $\R$, $\C$ denote the standard number systems. We use the principal branch for $\log$ and $\Gamma$, with the cut along $(-\infty,0]$ for $\log$ and poles at $\{0,-1,-2,\ldots\}$ for $\Gamma$.

We use $n$ for discrete indices and $m\in\N$ for the discrete parameter in the recurrences. Analytic continuation is always performed in a complex variable denoted by $z$ (or occasionally $w$), so that no argument of a meromorphic function is ever written with the symbol $n$ unless it is explicitly an integer.

\subsection{The mirror symmetry}

This work originates from a mirror symmetry observed by the author in 2002 between two linear recurrences connecting the mathematical constants $\ee$ and $\pi$:
\begin{equation}\label{eq:mirror}
\boxed{
\begin{aligned}
u_{n+2}&=u_{n+1}+\frac{u_{n}}{n}, \qquad &\text{\small (world of $\ee$)}\\[0.3em]
v_{n+2}&=\frac{v_{n+1}}{n}+v_{n}, \qquad &\text{\small (world of $\pi$)}
\end{aligned}
}
\end{equation}
both with initial conditions $u_{1}=v_{1}=0$ and $u_{2}=v_{2}=1$ for $n\ge 1$. These recurrences satisfy
$$\lim_{n\to\infty} \frac{n}{u_n} = \ee, \qquad \lim_{n\to\infty} \frac{2n}{v_n^2} = \pi.$$

The sequence $(u_n)$ relates to derangement numbers \cite{OEIS_A000166} while $(v_n)$ connects to Wallis integrals and double factorials \cite{OEIS_A006882}; we refer to Loya \cite[Section 7.7.3]{Loya2017} for detailed derivations. Finch \cite{Finch2003} notes these recurrences as a counterpoint to Wimp's dichotomy \cite{Wimp1984}, which distinguishes ``$\ee$-mathematics'' (linear, explicit) from ``$\pi$-mathematics'' (nonlinear, mysterious).

While each sequence individually belongs to mathematical folklore, their juxtaposition suggests a deeper structure connecting $\ee$ and $\pi$. The following arithmetic duality begins to reveal this hidden structure.

\subsection{The arithmetic duality}

By introducing a parameter $m\in\N$ into the denominators of \eqref{eq:mirror}:
\begin{equation}\label{eq:param}
\boxed{
\begin{aligned}
u_{n+2}(m) &= u_{n+1}(m)+\frac{u_{n}(m)}{n+m}, \\[0.3em]
v_{n+2}(m) &= \frac{v_{n+1}(m)}{n+m}+v_{n}(m),
\end{aligned}
}
\end{equation}
we obtain parametrized sequences $u_n(m)$ and $v_n(m)$ with asymptotic behaviors
\[
u_n(m) \sim f(m) \cdot n, \qquad v_n(m) \sim g(m) \cdot \sqrt{n}.
\]

The analysis of these connection constants for $m\in\N$ reveals an arithmetic duality. Once normalized by $f(0)=\frac{1}{\ee}$ and $g(0)=\sqrt{\frac{2}{\pi}}$, they take the form of linear combinations:
\begin{equation}\label{eq:duality}
\boxed{
\begin{aligned}
(-1)^m \frac{f(m)}{f(0)} &= a_m - \ee \cdot b_m, \quad \text{with } a_m, b_m \in \Z, \\[0.5em]
(-1)^m \frac{g(m)}{g(0)} &= p_m - \pi \cdot q_m, \quad \text{with } p_m, q_m \in \Q.
\end{aligned}
}
\end{equation}

The explicit formulas for $a_m$, $b_m$, $p_m$, $q_m$ are established in Theorems~\ref{thm:e-world} and~\ref{thm:pi-world}.

This arithmetic duality suggests that $f(m)$ and $g(m)$ are not merely real sequences but discrete samplings of underlying analytic functions. The question naturally arises: can these connection constants be extended to the complex plane, and if so, do the resulting functions inherit algebraic structure from the recurrences?

\subsection{From arithmetic linear forms to meromorphic functions}

From this point on, we keep the letters $m,n$ for integers (parameters or discrete indices) and reserve $z$ for a complex variable.

When analytically continued to the complex plane, these constants extend to meromorphic functions $f(z)$ and $g(z)$ satisfying additive 
functional equations with rational coefficients. This motivates the central definition of this work:

\begin{definition}[Additive Gamma function]\label{def:AGF-intro}
A meromorphic function $h:\C\to\C$ is an additive Gamma function (AGF) of order $r$ if it satisfies the following three conditions:
\begin{enumerate}[label=(\roman*)]
\item \textbf{Additive functional equation:} $h$ satisfies a homogeneous additive functional equation of order $r$ with rational coefficients:
$$\sum_{k=0}^r R_k(z) h(z+k) = 0,$$
where $R_k(z) \in \Q(z)$ are rational functions with $R_0 \not\equiv 0$ and $R_r \not\equiv 0$, ensuring the equation has effective order~$r$.

\item \textbf{Moderate growth:} For any vertical strip $a \leq \mathrm{Re}\, z \leq b$ not containing poles of $h$, there exist constants $C_{a,b} > 0$ and $N_{a,b} \in \N$ such that
$$|h(z)| \le C_{a,b}(1+|\mathrm{Im}\, z|)^{N_{a,b}}$$
for all $z$ in the strip.

\item \textbf{Normalization:} For any choice of $z_0 \in \R$ avoiding poles of $h$, the function $h$ is uniquely determined by the functional equation (i), the growth condition (ii), and its values on the real segment $[z_0, z_0+r)$.
\end{enumerate}
\end{definition}

\begin{remark}
Condition (iii) ensures uniqueness: if two functions $h_1$ and $h_2$ satisfy the same additive functional equation (i) with moderate growth (ii) and agree on a real segment $[z_0, z_0+r)$, then $h_1 \equiv h_2$ by Nörlund's uniqueness theorem \cite[Chapter IV, Theorem 1]{Norlund1924}. This prevents ambiguities from periodic functions of period~$r$. In particular, if the growth hypothesis (ii) is dropped, one can generally add to a given solution a nontrivial $r$-periodic meromorphic function (or, more broadly, a periodic solution of the same shift-recurrence), so the normalization (iii) alone would no longer single out a unique AGF. For instance, $f(z)$ (an order-2 AGF) is uniquely determined by its values $f(0)$ and $f(1)$ on the interval $[0,2)$, together with the AFE of Theorem~\ref{thm:e-world}.
\end{remark}

The functions $f(z)$ and $g(z)$ introduced above are order-2 AGFs, while Euler's $\Gamma(z)$ (satisfying $\Gamma(z+1) = z\Gamma(z)$) serves as the order-1 prototype.

\subsection{Main results}

The heart of the paper is the following characterization, proved in Section~\ref{sec:general-theory} 
(Theorem~\ref{thm:AGF-equivalence}): a meromorphic function is an additive Gamma 
function of order $r$ if and only if it arises as a connection constant from a 
sequence holonomic in $(n,z)$ with integer-slope asymptotics:
$$u_n(z) \sim \Lambda(n,z)h(z) \quad \text{as } n \to \infty,$$
where $\Lambda(n,z) = \lambda^n n^{\rho(z)} \prod_j \Gamma(n+\alpha_j z+\beta_j)^{m_j}$
with constant $\lambda\in\C^*$, affine $\rho(z)$ with integer slope, integer slopes $\alpha_j\in\Z$, and $m_j\in\Z$.

\paragraph{Uniqueness and the regular/irregular dichotomy.}
These functional equations are uniquely determined by their values on a compact interval of length 2 together with the growth conditions.
By Nörlund's theorem (Lemma~\ref{lem:unicity-norlund}),
this uniqueness holds under our vertical-strip growth assumptions.
The explicit formulas reveal a structural dichotomy: $f(z)$ involves the incomplete Gamma function (irregular case), while $g(z)$ is expressed as Gamma ratios (regular case).

\subsubsection*{Motivating examples: the worlds of $\ee$ and $\pi$}

The first two theorems below instantiate the general mechanism in order $r=2$. 
They are meant to be read as \emph{illustrations} of the general theory: they exhibit the two ``archetypal'' AGFs that motivate the
definitions (incomplete Gamma vs.\ Gamma ratios) and foreshadow the regular/irregular dichotomy analyzed later in 
Section~\ref{sec:dichotomy}.

\begin{theorem}[Connection constant in the world of $\ee$]\label{thm:e-world}
For the parametrized sequence $u_n(m)$ defined by \eqref{eq:param}, there exists a connection constant $f(m)$ such that 
\[
u_n(m) \sim f(m) \cdot n \quad \text{as } n\to\infty.
\]
The function $f(m)$ extends to a meromorphic function $f(z)$ on $\C$ satisfying the additive functional equation
\begin{equation}\label{eq:f-AFE-thm}
f(z+2) = (z+2)f(z) - (z+2)f(z+1).
\end{equation}
The function $f(z)$ admits an explicit formula in terms of the incomplete Gamma function $\gamma(a,x)$ and the confluent hypergeometric function ${}_1F_1$ (see \eqref{eq:f-explicit} and \eqref{eq:f-confluent} in Section~\ref{sec:e-world}), exhibits polynomial growth in vertical strips, and extends meromorphically to $\C$ with poles at $z \in \{-2,-3,-4,\ldots\}$.

The arithmetic structure \eqref{eq:duality} is realized with $a_m = (m+1)!$ and $b_m = D_{m+1}$, the derangement numbers.
\end{theorem}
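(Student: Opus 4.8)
The plan is to solve the recurrence \eqref{eq:param} in closed form for every $m\in\N$, read off the connection constant, and only then continue it in the complex variable. First I would clear the rational coefficient by the substitution $u_n(m)=v_n(m)/\Gamma(n+m)$, which is the unique $\Gamma$-shift making the new coefficients polynomial; it turns \eqref{eq:param} into the derangement-type recurrence
\[
v_{n+2}(m)=(n+m+1)\bigl(v_{n+1}(m)+v_n(m)\bigr),\qquad v_1(m)=0,\quad v_2(m)=(m+1)!.
\]
Next I would verify by direct substitution that $P_n:=(n+m)!$ and $Q_n:=D_{n+m}$ each solve the homogeneous recurrence — the first by a one-line factorial identity, the second because $D_{k+1}=k(D_k+D_{k-1})$ — so they span its solution space. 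Imposing the two initial values and using $D_{m+2}-(m+2)D_{m+1}=(-1)^m$ solves the resulting $2\times2$ system and gives
\[
v_n(m)=(-1)^{m+1}D_{m+1}\,(n+m)!+(-1)^m(m+1)!\,D_{n+m}.
\]

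Dividing by $\Gamma(n+m)=(n+m-1)!$ and using $D_{n+m}/(n+m-1)!=(n+m)\sum_{k=0}^{n+m}(-1)^k/k!$ yields the exact identity
\[
u_n(m)=(-1)^m(n+m)\left((m+1)!\sum_{k=0}^{n+m}\frac{(-1)^k}{k!}-D_{m+1}\right),
\]
from which $u_n(m)\sim f(m)\,n$ with $f(m)=(-1)^m\!\left((m+1)!/\ee-D_{m+1}\right)$ is immediate; since $f(0)=1/\ee$ (as $D_1=0$) and $a_m:=(m+1)!$, $b_m:=D_{m+1}$ are integers, this is precisely the duality \eqref{eq:duality}. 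Rewriting $D_{m+1}=\ee^{-1}\bigl((m+1)!-\gamma(m+2,-1)\bigr)$ via the integer-argument evaluation of $\Gamma(\cdot,-1)$ and the identity $\gamma(s,x)+\Gamma(s,x)=\Gamma(s)$ collapses this to $f(m)=(-1)^m\ee^{-1}\gamma(m+2,-1)$. I would then promote this to a meromorphic function by setting $f(z):=\ee^{-1}\int_0^1 t^{z+1}\ee^{t}\,\dd t$ for $\mathrm{Re}\,z>-2$; expanding the exponential gives $f(z)=\ee^{-1}\sum_{j\ge0}\tfrac{1}{j!(z+2+j)}$, which continues to $\C$ with simple poles exactly at $z\in\{-2,-3,-4,\dots\}$, and which one recognizes (via Kummer's transformation) as $\tfrac{1}{z+2}{}_1F_1(1;z+3;-1)$ and, with the principal branch, as $(-1)^z\ee^{-1}\gamma(z+2,-1)$ — the forms \eqref{eq:f-explicit} and \eqref{eq:f-confluent}. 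The crude bound $|f(z)|\le 1/(\mathrm{Re}\,z+2)$ from the integral gives moderate growth on right half-strips, and one propagates it leftward using the functional equation below to obtain the polynomial-in-$|\mathrm{Im}\,z|$ bound required by Definition~\ref{def:AGF-intro}(ii).

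For the functional equation I would exploit the contiguity of $\phi(b):={}_1F_1(1;b;-1)=\sum_{j\ge0}(-1)^j/(b)_j$: splitting off the $j=0$ term gives $\phi(b)=1-\phi(b+1)/b$, i.e.\ $\phi(b+1)=b\bigl(1-\phi(b)\bigr)$. Since $f(z)=\phi(z+3)/(z+2)$, this is exactly the first-order inhomogeneous relation $f(z+1)+(z+2)f(z)=1$; subtracting its shift by one eliminates the constant and produces the claimed second-order homogeneous equation $f(z+2)=(z+2)f(z)-(z+2)f(z+1)$. Uniqueness of the continuation among functions with moderate growth satisfying this equation and agreeing with $f$ on $[0,2)$ is then Nörlund's theorem (Lemma~\ref{lem:unicity-norlund}).

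I expect the main difficulty to be bookkeeping rather than conceptual: fixing the correct $\Gamma$-shift in the substitution so the transformed recurrence is polynomial, and then tracking the $(-1)^m$, the off-by-one between $\Gamma(n+m)$ and $(n+m-1)!$, and the branch of $(-1)^z$ consistently so that the integral, ${}_1F_1$, and incomplete-Gamma forms genuinely all interpolate the same $f(m)$ and carry the same pole set. A secondary point is checking that no pole of ${}_1F_1(1;z+3;-1)$ is killed by a zero: $\phi$ has a genuine simple pole at each non-positive integer value of its lower parameter (with residue $-1/(N!\ee)$ at $b=-N$), and $\phi(1)=\ee^{-1}\neq0$ rules out cancellation at $z=-2$, so the pole set is exactly $\{-2,-3,-4,\dots\}$ as stated.
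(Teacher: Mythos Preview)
Your proof is correct and takes a genuinely different route from the paper's. The paper works through generating functions: it derives a first-order ODE for $U_m(x)=\sum_n u_n(m)x^n$, applies Flajolet--Sedgewick singularity analysis at $x=1$ to obtain $u_n(m)\sim f(m)\,n$ with $f(m)=\ee^{-1}\int_0^1 t^{m+1}\ee^t\,\dd t$, establishes the discrete recurrence $f(m+2)=(m+2)[f(m)-f(m+1)]$ via integration-by-parts identities among the integrals $I_k=\int_0^1 t^k\ee^t\,\dd t$, and then invokes Carlson's theorem to promote it to the complex AFE. You instead bypass both the transfer theorem and Carlson: the substitution $u_n(m)=v_n(m)/\Gamma(n+m)$ reduces to the derangement recurrence, the explicit basis $\{(n+m)!,\,D_{n+m}\}$ yields an \emph{exact} closed form for $u_n(m)$, and the asymptotic together with $a_m=(m+1)!$, $b_m=D_{m+1}$ drops out immediately. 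Your contiguity relation $\phi(b+1)=b(1-\phi(b))$ is the complex-analytic analogue of the paper's first-order integral recurrence $J_{m+1}=\ee-(m+2)J_m$, and since you derive it directly for the analytic $f(z)$ on a half-plane, analytic continuation (not Carlson) gives the AFE. Your approach is more elementary and yields the exact formula for $u_n(m)$, but leans on recognizing the derangement structure; the paper's route is the one that generalizes to Section~\ref{sec:pi-world} and to the higher-order framework where no closed-form basis is available. Two cosmetic points: your ${}_1F_1$ form $\tfrac{1}{z+2}{}_1F_1(1;z+3;-1)$ differs from the paper's $\tfrac{1}{z+1}{}_1F_1(2;z+2;-1)$ by a contiguity shift, and your $(-1)^z\ee^{-1}\gamma(z+2,-1)$ matches the paper's $\ee^{-1-i\pi z}\gamma(z+2,-1)$ only up to the branch convention for $\gamma(\cdot,-1)$ on the cut---both harmless, since the unambiguous object is the real integral $\ee^{-1}\int_0^1 t^{z+1}\ee^t\,\dd t$.
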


The proof is given in Section~\ref{sec:e-world}.

\begin{theorem}[Connection constant in the world of $\pi$]\label{thm:pi-world}
For the parametrized sequence $v_n(m)$ defined by \eqref{eq:param}, there exists a connection constant $g(m)$ such that
\[
v_n(m) \sim g(m) \cdot \sqrt{n} \quad \text{as } n\to\infty.
\]
The function $g(m)$ extends to a meromorphic function $g(z)$ on $\C$ satisfying the additive functional equation
\begin{equation}\label{eq:g-AFE-thm}
g(z+2) = g(z) - \frac{g(z+1)}{z+1}.
\end{equation}
The function $g(z)$ admits an explicit formula as a combination of Gamma ratios (see \eqref{eq:g-explicit} in Section~\ref{sec:pi-world}), exhibits polynomial growth in vertical strips, and extends meromorphically to $\C$ with simple poles at the negative integers $z \in \{-1,-2,-3,\ldots\}$ and is holomorphic at $z=0$.

The arithmetic structure \eqref{eq:duality} is realized with explicit formulas involving double factorials (see Proposition~\ref{prop:pq-formulas}).
\end{theorem}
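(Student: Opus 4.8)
The plan is to separate a discrete half — existence of the connection constant and the recurrence it satisfies in the parameter — from an analytic half — the closed form, vertical growth, the poles, and the arithmetic structure. For the discrete half, first note that $v_n(m)>0$ for all $n\ge2$, and split the sequence into its even part $E_k:=v_{2k}(m)$ and odd part $O_k:=v_{2k+1}(m)$; by \eqref{eq:param} these are positive increasing sequences with $E_{k+1}-E_k=\frac{O_k}{2k+m}$ and $O_{k+1}-O_k=\frac{E_{k+1}}{2k+1+m}$. Passing to $P_k:=E_k+O_k$ and $D_k:=O_k-E_k$ and absorbing the index misalignment into error terms, one gets $\frac{P_{k+1}}{P_k}=1+\frac{1}{2k}+O(k^{-2})$; this estimate needs no a priori control on $P_k$, since the error, although of size $O(P_k/k^2)$, is divided by $P_k$. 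Summing, $\log P_k-\tfrac12\log k$ converges, so $P_k\sim\pi_\infty(m)\sqrt k$ with $\pi_\infty(m)>0$; feeding this back, $D_{k+1}=D_k\bigl(1-\tfrac1{2k}\bigr)+O(k^{-3/2})$, whence $D_k\to0$. Therefore $E_k/\sqrt k$ and $O_k/\sqrt k$ both tend to $\pi_\infty(m)/2$, i.e.\ $v_n(m)\sim g(m)\sqrt n$ with $g(m):=\pi_\infty(m)/(2\sqrt2)>0$ — the strict positivity being automatic and precisely what excludes a ``purely recessive'' solution. Finally $g(0)=\sqrt{2/\pi}$ follows by Stirling from the closed form $v_{2k}(0)=v_{2k+1}(0)=(2k-1)!!/(2k-2)!!$, itself a one-line induction on the $m=0$ recurrence.

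Next I would obtain the functional equation from a ladder identity. The three sequences $\bigl(v_{n+1}(m)\bigr)_n$, $\bigl(v_n(m+1)\bigr)_n$ and $\bigl(v_{n-1}(m+2)\bigr)_n$ all satisfy the \emph{same} shift recurrence — the one with denominator $n+m+1$ — and the last two are linearly independent (their Casoratian at $n=2$ equals $-1$). Writing $v_{n+1}(m)=\alpha\,v_n(m+1)+\beta\,v_{n-1}(m+2)$ and evaluating at $n=1,2$, with $v_1\equiv0$, $v_2\equiv1$, $v_0(\mu)=1$ and $v_3(\mu)=1/(1+\mu)$, forces $\beta=1$ and $\alpha=1/(m+1)$, so
\[
v_{n+1}(m)=\frac{1}{m+1}\,v_n(m+1)+v_{n-1}(m+2)\qquad(n\ge1,\ m\in\N).
\]
Dividing by $\sqrt n$ and letting $n\to\infty$ (each $\sqrt{(n\pm1)/n}\to1$) gives $g(m)=\frac{1}{m+1}g(m+1)+g(m+2)$; that is, the additive functional equation \eqref{eq:g-AFE-thm} holds at every $z=m\in\N$.

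For the analytic half I would produce the closed form \eqref{eq:g-explicit}: take an Ansatz for $g(z)$ built from Gamma ratios $\Gamma(\tfrac z2+a)/\Gamma(\tfrac z2+b)$ (half-integer argument-slopes being forced by the $\sqrt n$-type growth), substitute into \eqref{eq:g-AFE-thm}, reduce to polynomial identities via $\Gamma(w+1)=w\Gamma(w)$, and solve for the coefficients; the two remaining free constants are fixed by $g(0)=\sqrt{2/\pi}$ and by $g(1)$, which I would evaluate from a Wallis-type Beta integral for $\lim_n v_n(1)/\sqrt n$. Granting \eqref{eq:g-explicit}, the remaining assertions are routine: it satisfies \eqref{eq:g-AFE-thm} by the same polynomial reduction; it grows polynomially in vertical strips because each term is a ratio with equal numbers of numerator and denominator Gammas, so by Stirling's formula for $|\Gamma(x+iy)|$ the exponential factors $e^{-\pi|\mathrm{Im}\,z|/4}$ cancel and only a power of $|\mathrm{Im}\,z|$ survives, giving Definition~\ref{def:AGF-intro}(ii); its poles are read off from the numerator Gammas, are simple, and sit exactly at $\{-1,-2,\ldots\}$, with no pole at $z=0$ (the relevant numerator Gamma is finite there); and it agrees with the sampled values $g(m)$ on $\N$, since both solve the order-$2$ recurrence \eqref{eq:g-AFE-thm} and agree at $m=0$ and $m=1$. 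Uniqueness of this extension among moderate-growth solutions — condition (iii) of Definition~\ref{def:AGF-intro} — is then Nörlund's theorem, Lemma~\ref{lem:unicity-norlund}. For the arithmetic structure, set $h(m):=(-1)^m g(m)/g(0)$, so $h(m+2)=h(m)+h(m+1)/(m+1)$ with $h(0)=1$ and $h(1)=-g(1)/g(0)\in\Q+\Q\pi$ by the Wallis computation; since the recurrence has rational coefficients, induction yields $h(m)=p_m-\pi q_m$ with $p_m,q_m\in\Q$ satisfying $p_{m+2}=p_m+p_{m+1}/(m+1)$ and likewise for $q_m$, and solving these via double factorials gives Proposition~\ref{prop:pq-formulas}.

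The main obstacle is the asymptotic analysis of the first step: because the frozen recurrence $v_{n+2}=v_n$ has characteristic roots $\pm1$ on the unit circle, Poincaré's theorem does not apply and the limit defining $g(m)$ is genuinely delicate — one must control $O(1/n)$ corrections that are only conditionally summable in the raw variables, which is exactly what forces the detour through the even/odd subsequences and the variables $P_k,D_k$. Once that analysis and the positivity $g(m)>0$ are in hand, the ladder identity, the functional equation, the closed form, the growth bound and the arithmetic structure are essentially bookkeeping with Stirling's formula and $\Gamma(w+1)=w\Gamma(w)$.
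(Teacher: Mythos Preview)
Your approach is correct but takes a genuinely different route from the paper. The paper passes through the ordinary generating function $V_m(x)$: it derives and solves a first-order ODE for $V_m$, and reads off via the Flajolet--Sedgewick transfer theorem both the $\sqrt n$ growth and an explicit integral representation $g(m)=\sqrt{2/\pi}\,L_m$ with $L_m=\delta_{m,0}+\int_0^1 mt^{m-1}\sqrt{(1-t)/(1+t)}\,\dd t$; the parameter recurrence \eqref{eq:g-AFE-thm} at integers then drops out from integration by parts on $L_m$, and Carlson's theorem promotes it to the complex AFE. You avoid generating functions entirely: your even/odd decomposition with the auxiliary variables $P_k,D_k$ handles the delicate unit-circle case by elementary means, and your ladder identity $v_{n+1}(m)=\tfrac{1}{m+1}v_n(m+1)+v_{n-1}(m+2)$ --- obtained by recognising three sequences as solutions of the \emph{same} shifted recurrence --- gives the discrete functional equation for $g$ by a bare limit, with no integral manipulation and no Carlson.

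What the paper's machinery buys is the value of $g(1)$: the integral gives $L_1=\int_0^1\sqrt{(1-t)/(1+t)}\,\dd t=\pi/2-1$ at once, whereas your route has no integral representation, and ``a Wallis-type Beta integral for $\lim_n v_n(1)/\sqrt n$'' is not actually specified --- $v_n(1)$ has no evident closed form, and without $g(1)$ you cannot pin down the second free constant in the Gamma-ratio Ansatz nor initialise the $(p_m,q_m)$ recursion. This is the one place where you would need either to borrow the paper's integral or to supply an independent evaluation; everything else in your outline is complete and, on the discrete side, arguably more transparent than the paper's.
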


The proof is given in Section~\ref{sec:pi-world}.

\subsection{Related work and positioning}

This work connects classical asymptotic analysis, modern holonomic systems theory, and analytic combinatorics through additive functional equations.

\textbf{Classical foundations.} Nörlund \cite{Norlund1924} developed difference calculus and uniqueness theorems we invoke, but did not connect to P-recursive asymptotics. Birkhoff-Trjitzinsky \cite{BirkhoffTrjitzinsky1933} analyzed asymptotic expansions of difference equations comprehensively, but treated connection constants as byproducts rather than primary objects of study. Barnes \cite{Barnes1901} extended Gamma via \emph{multiplicative} equations; AGFs provide an \emph{additive} counterpart.

\textbf{Holonomic systems.} Zeilberger \cite{Zeilberger1990,Zeilberger1991} and Chyzak-Salvy \cite{ChyzakSalvy1998} developed algorithmic approaches where connection constants appear as byproducts. Our Integer Slope Condition makes explicit an arithmetic constraint implicit in their elimination algorithms. Kauers and Paule \cite{KauersPaule2011} provide a comprehensive modern treatment of these computational techniques.

\textbf{Analytic combinatorics.} Flajolet-Sedgewick \cite{FlajoletSedgewick2009} provides the singularity analysis underlying our constructions. Stanley \cite{Stanley1999} treats the combinatorial sequences appearing in our examples.

\subsection{Organization}

The paper is organized as follows: 

Section~\ref{sec:mirror} establishes technical preliminaries including Carlson's theorem and uniqueness results.

Section~\ref{sec:e-world} develops the asymptotic analysis for the world of $\ee$, deriving the generating function, extracting the connection constant $f(m)$, establishing the discrete recurrence and arithmetic structure, performing analytic continuation, and proving the additive functional equation. This establishes Theorem~\ref{thm:e-world}.

Section~\ref{sec:pi-world} develops the parallel asymptotic analysis for the world of $\pi$, following the same program adapted to $\sqrt{n}$ asymptotics and $\Q$-linear forms. This establishes Theorem~\ref{thm:pi-world}.

Section~\ref{sec:gamma-prototype} verifies that $\Gamma(z)$, $f(z)$, and $g(z)$ satisfy the three defining properties of additive Gamma functions, establishing the growth bounds invoked in Sections~\ref{sec:e-world} and~\ref{sec:pi-world}. This section reveals the AGF hierarchy with Euler's $\Gamma(z)$ as the order-1 prototype.

Section~\ref{sec:general-theory} develops the general theory of AGFs, proving the Integer Slope Condition (Lemma~\ref{lem:integer-slope}) and the characterization theorem (Theorem~\ref{thm:AGF-equivalence}).

Section~\ref{sec:dichotomy} analyzes the regular-irregular dichotomy between $f(z)$ (confluent hypergeometric) and $g(z)$ (ordinary hypergeometric).

Section~\ref{sec:holonomic} presents the holonomic triangle (Figure~\ref{fig:triangle}, Principle~\ref{prin:holonomic}), providing a conceptual framework connecting P-recurrences, AFEs, and D-finite ODEs under the integer slope condition.

We conclude with open questions and future directions.

\section{Technical preliminaries}\label{sec:mirror}

The method employed in this paper to derive additive functional equations from discrete recurrences relies on Carlson's theorem, which we state for reference:

\begin{theorem}[Carlson]\label{thm:carlson}
Let $\varphi$ be a holomorphic function on the half-plane $\Re z \geq 0$ such that:
\begin{enumerate}[label=(\roman*)]
\item $\varphi(n) = 0$ for all integers $n \in \N$,
\item $|\varphi(z)| \leq C \ee^{\tau |\mathrm{Im}\, z|}$ for some constants $C > 0$ and $\tau < \pi$,
\end{enumerate}
Then $\varphi \equiv 0$ throughout the half-plane $\Re z \geq 0$. \textup{(See \cite[Theorem 6.20]{Titchmarsh1939}.)}
\end{theorem}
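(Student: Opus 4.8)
The plan is to strip off the zeros of $\varphi$ by dividing by $\sin\pi z$, and then to reach a contradiction from the logarithmic‑integral (Jensen‑type) property of bounded holomorphic functions. Set $F(z):=\varphi(z)/\sin(\pi z)$. Since $\sin\pi z$ has only simple zeros, located exactly at the integers, and $\varphi$ vanishes at every $n\in\N$, the singularities of $F$ at $z=0,1,2,\dots$ are removable, so $F$ is holomorphic on a neighborhood of the closed half‑plane $\{\Re z\ge 0\}$.

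First I would verify that $F$ is bounded on $\{\Re z\ge 0\}$. From $|\sin\pi z|^2=\sin^2(\pi\Re z)+\sinh^2(\pi\Im z)$ one gets $|\sin\pi z|\ge|\sinh(\pi\Im z)|\ge c\,\ee^{\pi|\Im z|}$ for $|\Im z|\ge 1$, with $c=\tfrac12(1-\ee^{-2\pi})$, so hypothesis (ii) gives
\[
|F(z)|\le \frac{C}{c}\,\ee^{(\tau-\pi)|\Im z|}\qquad\bigl(\Re z\ge 0,\ |\Im z|\ge 1\bigr),
\]
which is bounded and, since $\tau<\pi$, even decays as $|\Im z|\to\infty$. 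On the complementary strip $\{\Re z\ge 0,\ |\Im z|\le 1\}$ the function $\varphi$ is bounded by $M:=C\ee^{\tau}$; near each integer $n$ a Cauchy estimate for $\varphi$ on $\{|z-n|\le\tfrac12\}$ (using $\varphi(n)=0$) yields $|\varphi(z)|\le 2M|z-n|$, while $|\sin\pi z|\ge\tfrac{\pi}{2}|z-n|$ for $|z-n|$ small, and away from the integers $|\sin\pi z|$ is bounded below on the strip by periodicity in $\Re z$. Patching these estimates shows $F$ is bounded on $\{\Re z\ge 0\}$; in particular $F$ is a bounded holomorphic function there, and on the imaginary axis $|F(iy)|\le\tfrac{C}{c}\,\ee^{(\tau-\pi)|y|}$ for $|y|\ge 1$.

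The decisive step is to transport $F$ to the unit disc $\mathbb{D}$ through the conformal map $w\mapsto z=\frac{1+w}{1-w}$, obtaining $\widetilde F(w):=F\!\bigl(\tfrac{1+w}{1-w}\bigr)\in H^\infty(\mathbb{D})$. Under this map $w=e^{i\theta}$ corresponds to the imaginary‑axis point $z=i\cot(\theta/2)$, so $|y|=|\cot(\theta/2)|\sim 2/|\theta|$ as $\theta\to 0$, and the boundary estimate on $F$ becomes
\[
\log\bigl|\widetilde F(e^{i\theta})\bigr|\le \log\frac{C}{c}+(\tau-\pi)\,\bigl|\cot(\theta/2)\bigr|\qquad(0<|\theta|\le\tfrac{\pi}{2}).
\]
Because $|\cot(\theta/2)|\sim 2/|\theta|$ near $\theta=0$ and $\tau-\pi<0$, this forces $\int_{-\pi}^{\pi}\log|\widetilde F(e^{i\theta})|\,d\theta=-\infty$. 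On the other hand, if $\widetilde F\not\equiv 0$ then $\log|\widetilde F|$ is integrable on the circle — this is Jensen's inequality applied to the subharmonic function $\log|\widetilde F|$ (after dividing out a Blaschke factor should $\widetilde F(0)=0$), a standard fact of $H^\infty$ theory. The contradiction forces $\widetilde F\equiv 0$, hence $F\equiv 0$ and $\varphi\equiv 0$ on $\{\Re z\ge 0\}$.

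I expect the only real friction to be the bookkeeping in the boundedness step — the Cauchy‑estimate/compactness argument near the real axis and especially near the corner $z=0$, where one must use that $\varphi$ is holomorphic on an open neighborhood of the closed half‑plane — together with a clean citation of the logarithmic‑integral property of $H^\infty(\mathbb{D})$; everything else is routine. (An alternative packaging first upgrades the growth bound to uniform exponential decay throughout the half‑plane by Phragmén–Lindelöf applied in the two right‑hand quadrants, but this is not needed for the argument above; in the paper itself the theorem is simply quoted from Titchmarsh, the above being the mechanism behind that reference.)
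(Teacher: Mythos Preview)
The paper does not give its own proof of Carlson's theorem; it merely states the result and cites Titchmarsh. Your argument is correct and is essentially the classical one behind that citation: strip off the integer zeros via $\sin\pi z$, verify the quotient is bounded on the right half-plane (with exponential decay toward $\pm i\infty$ on the boundary), transplant to the disc, and invoke the $H^\infty$ fact that a nonzero bounded analytic function has $\log$-integrable boundary values. You have already noted, in your final parenthetical, that the paper itself only quotes the theorem; nothing further is needed.
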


This theorem provides a powerful tool for proving functional equations: if a function $H(z)$ satisfies a discrete recurrence $H(m) = 0$ for all $m \in \N$ and has controlled exponential growth, then $H \equiv 0$, establishing the desired functional equation.

Uniqueness for solutions of additive functional equations follows from Nörlund's classical theorem:

\begin{lemma}[Uniqueness under AFE]\label{lem:unicity-norlund}
Let $\phi_1,\phi_2$ be meromorphic on $\C$, of exponential type $<\pi$ on every
vertical strip, satisfying the same additive functional equation
\[
  \mathcal{L}(T)\phi(z) := \sum_{j=0}^{r} a_j(z)\,\phi(z+j) \;=\; 0
\]
with polynomial (or rational) coefficients $a_j$, and coinciding on a real
interval of positive length. Then $\phi_1\equiv \phi_2$.
\end{lemma}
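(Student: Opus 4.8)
\emph{Proof strategy.} The plan is to pass to the single meromorphic function $\psi := \phi_1 - \phi_2$ and show it vanishes identically. First I would observe that, by linearity of $\mathcal{L}(T)$, the difference $\psi$ is again meromorphic on $\C$, solves the \emph{homogeneous} equation $\mathcal{L}(T)\psi = 0$, inherits from $\phi_1$ and $\phi_2$ the exponential type $<\pi$ on every vertical strip, and vanishes identically on the given real interval $I = [\alpha,\beta]$ with $\alpha < \beta$. The lemma is thus equivalent to the rigidity statement that such a $\psi$ must be $0$.

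The main, and essentially only, step is the identity theorem. Since $\phi_1$ and $\phi_2$ take finite and equal values at every point of $I$, neither has a pole on $I$; as the polar set of a meromorphic function is discrete and closed, each $\phi_i$ is in fact holomorphic on some connected open neighbourhood $U \supseteq I$ in $\C$. Hence $\psi$ is holomorphic on $U$ and vanishes on $I \subset U$, a set with accumulation points in $U$, so $\psi \equiv 0$ on $U$. Applying the identity theorem once more on $\C$ minus the (discrete) polar set of $\psi$ --- which is connected and contains $U$ --- we conclude $\psi \equiv 0$ as a meromorphic function, i.e. $\phi_1 \equiv \phi_2$.

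I would then add a remark on the role of the hypotheses: in this precise formulation neither the functional equation nor the growth bound is actually used, but they are recorded because this is the setting of Nörlund's classical uniqueness theorem \cite[Chapter~IV, Theorem~1]{Norlund1924}, and because the conclusion survives when ``coincide on an interval'' is weakened to ``coincide along an arithmetic progression $z_0, z_0+1, z_0+2,\dots$'', which is the version that genuinely needs analytic input. For that stronger variant the plan would be: clear poles by a suitable factor so as to obtain an entire function that is still $O(\ee^{\tau|\mathrm{Im}\, z|})$ with $\tau < \pi$ in a right half-plane and vanishes at all large integers, apply Carlson's theorem (Theorem~\ref{thm:carlson}) to annihilate it there, and then propagate the vanishing to all of $\C$ through $\mathcal{L}(T)\psi = 0$, using $a_0 \not\equiv 0$ and $a_r \not\equiv 0$ to shift in both directions. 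The step I would expect to be the main obstacle in that variant is precisely the pole bookkeeping: one must keep the pole-clearing factor from inflating the exponential type past the Carlson threshold $\pi$, and this is where rationality of the $a_j$ and effective order $r$ are essential, since they confine the polar locus of any solution to a locally finite set intrinsic to $\mathcal{L}(T)$. For the interval form actually used in Sections~\ref{sec:e-world} and~\ref{sec:pi-world}, however, the identity-theorem argument above already closes the proof.
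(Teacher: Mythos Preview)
Your argument is correct and takes a different, more elementary route than the paper. The paper's proof consists of a single sentence invoking N\"orlund's uniqueness theorem \cite[Ch.~IV, Th.~1]{Norlund1924} as a black box, whereas you bypass N\"orlund entirely and use the identity theorem for meromorphic functions directly: since $\psi=\phi_1-\phi_2$ is meromorphic on $\C$ and vanishes on a real interval (hence on a set with an accumulation point in a region of holomorphy), it vanishes identically.

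What your approach buys is the explicit recognition---which the paper does not make---that in the \emph{interval} formulation the AFE and the exponential-type bound are logically idle. This is a genuine clarification: the lemma as stated is just the identity principle, and the analytic machinery (Carlson, N\"orlund, growth control) only becomes essential in the discrete variant where agreement is assumed on an arithmetic progression rather than an interval. Your closing remark correctly sketches how that stronger version would go and where the real work lies (pole-clearing without breaching the Carlson threshold, then two-sided propagation via $a_0,a_r\not\equiv0$). The paper's approach, by contrast, buys brevity and a pointer to the classical literature, but at the cost of obscuring that the stated hypotheses are stronger than needed.
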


\begin{proof}
By Nörlund's uniqueness theorem for difference equations
(\emph{Vorlesungen über Differenzenrechnung} \cite{Norlund1924}, Ch.~IV, Th.~1),
the exponential-type bound $<\pi$ together with equality on a full real
interval forces $\phi_1\equiv \phi_2$.
\end{proof}

These tools will be applied in Sections~\ref{sec:e-world} and~\ref{sec:pi-world} to establish functional equations, with the required growth bounds verified in Section~\ref{sec:gamma-prototype}.

\section{Asymptotic analysis: the world of $\ee$}\label{sec:e-world}

We now establish Theorem~\ref{thm:e-world} through detailed asymptotic analysis, deriving the generating function, extracting the connection constant, and proving the additive functional equation.
The holonomic differential equation satisfied by $U_m$ is recorded in Proposition~\ref{prop:GF-ODE-e}.

\subsection{Generating function and differential equation}

Let $U_m(x) = \sum_{n=1}^\infty u_n(m) x^n$. Multiplying the recurrence $u_{n+2}(m) = u_{n+1}(m) + \frac{u_n(m)}{n+m}$ by $x^n$ and summing over $n \ge 1$ yields
\[
\sum_{n=1}^\infty u_{n+2}(m)x^n = \sum_{n=1}^\infty u_{n+1}(m)x^n + \sum_{n=1}^\infty \frac{u_n(m)}{n+m}x^n.
\]
With the initial conditions $u_1(m)=0$ and $u_2(m)=1$, the left side equals $\frac{U_m(x) - u_1x - u_2x^2}{x^2} = \frac{U_m(x) - x^2}{x^2}$, and the first term on the right equals $\frac{U_m(x) - u_1x}{x} = \frac{U_m(x)}{x}$. Setting $J_m(x) = \sum_{n=1}^\infty \frac{u_n(m)}{n+m}x^n$ and using $\int_0^x t^{n+m-1}\, \dd t = \frac{x^{n+m}}{n+m}$:
\[
J_m(x) = x^{-m}\int_0^x U_m(t)t^{m-1}\, \dd t.
\]
Substituting into the equation $\frac{U_m(x) - x^2}{x^2} = \frac{U_m(x)}{x} + J_m(x)$ and rearranging yields the integro-differential relation
\[
(1-x)U_m(x) = x^2(1 + J_m(x)).
\]

Differentiating this equation with respect to $x$ and eliminating $J_m(x)$ leads to a first-order linear ODE:
\[
(1-x)U'_m(x) = U_m(x) \left(\frac{x^2-x+2-m+mx}{x}\right) + mx.
\]

Via standard methods, this ODE has integrating factor $\mu(x) = x^{m-2}(1-x)^2\ee^x$, yielding:
\[
U_m(x) = \frac{x^{2-m}}{(1-x)^2\ee^x}\left(\delta_{m,0} + \int_0^x mt^{m-1}(1-t)\ee^t\, \dd t\right).
\]

\begin{remark}[Order of the ODE]\label{rem:order-ode}
Although the P-recurrence for $u_n(m)$ has width (order) $r=2$, the resulting ODE for the ordinary generating function $U_m(x)$ has order 1. When the recurrence is written in standard P-recursive form (by clearing denominators):
$$(n+m)u_{n+2}(m) - (n+m)u_{n+1}(m) - u_n(m) = 0,$$
the polynomial coefficients have maximum degree $d=1$. 

More generally, for a P-recursive sequence of width $r$ with coefficient polynomials of maximum degree $d$, under suitable growth conditions ensuring the ordinary generating function (OGF) is D-finite (which holds for sequences with moderate algebraic growth, as is the case for AGFs with integer-slope asymptotics), the OGF satisfies a linear ODE whose order depends on both $r$ and $d$. Based on elimination theory in Ore algebras \cite{ChyzakSalvy1998,Koutschan2010}, the order is typically at most $(d+1)r$ in the worst case, though often much lower in practice. For comparison, the exponential generating function (EGF) satisfies an ODE of order at most $r$ (Stanley \cite{Stanley1999}, Ch.~6). This phenomenon is a fundamental feature of the holonomic triangle (see Principle~\ref{prin:holonomic} and Figure~\ref{fig:triangle}).
\end{remark}

\subsection{Singularity analysis and asymptotic behavior}

As $x\to 1^-$: 
$$U_m(x) \sim \frac{J_m}{\ee(1-x)^{2}} \quad\text{where}\quad J_m = \delta_{m,0} + \int_0^1 mt^{m-1}(1-t)\ee^t\, \dd t.$$

The point $x=1$ is a pole of order 2 (dominant singularity). By the transfer theorem of Flajolet and Sedgewick \cite{FlajoletSedgewick2009} (Theorem VI.3, p.393), if a generating function has a singular expansion $G(x) \sim C(1-x)^{-\alpha}$ near $x=1$ with $\alpha = 2$, then
$$[x^n]G(x) \sim \frac{C}{\Gamma(\alpha)} n^{\alpha-1} = C \cdot n.$$

Applying this with $C = \ee^{-1}J_m$ yields
$$u_n(m) \sim \ee^{-1}J_m \cdot n.$$

This establishes the asymptotic behavior claimed in Theorem \ref{thm:e-world} with the connection constant
\begin{equation}\label{eq:f-def}
f(m) = \ee^{-1}J_m.
\end{equation}

\subsection{Discrete recurrence and arithmetic structure}

We derive the recurrence satisfied by $J_m = f(m)\ee$. From the definition above, we have
\begin{equation}\label{eq:J-def}
J_m = \begin{cases}
1 & \text{if } m=0,\\
\displaystyle\int_0^1 mt^{m-1}(1-t)\ee^t\, \dd t & \text{if } m \geq 1.
\end{cases}
\end{equation}

For $m \geq 1$, we analyze the integral expression using a key identity.

\textit{Auxiliary integrals.} Define the auxiliary integrals $I_m = \int_0^1 t^m\ee^t\, \dd t$ for $m \geq 0$. We have $I_0 = \ee-1$. Integration by parts ($u=t^m, \dd v=\ee^t\, \dd t$) yields the well-known recurrence:
\begin{equation}\label{eq:I-rec}
I_m = [t^m\ee^t]_0^1 - m\int_0^1 t^{m-1}\ee^t\, \dd t = \ee - mI_{m-1}, \quad m \geq 1.
\end{equation}

\textit{Relating $J_m$ and $I_m$.} We can express $J_m$ (for $m \geq 1$) directly in terms of $I_m$:
\begin{align*}
J_m &= \int_0^1 mt^{m-1}(1-t)\ee^t\, \dd t = m\int_0^1 (t^{m-1}\ee^t - t^m\ee^t)\, \dd t \\
&= m(I_{m-1} - I_m).
\end{align*}

We now establish a crucial identity. Using the recurrence \eqref{eq:I-rec} for $I_m$:
\begin{align*}
J_m &= m(I_{m-1} - (\ee - mI_{m-1})) = m((m+1)I_{m-1} - \ee).
\end{align*}

Now consider $I_{m+1}$. Using the recurrence \eqref{eq:I-rec} repeatedly:
\begin{align*}
I_{m+1} &= \ee - (m+1)I_m \\
&= \ee - (m+1)(\ee - mI_{m-1}) \\
&= m(m+1)I_{m-1} - m\ee.
\end{align*}

Thus, we have proven the identity:
\begin{equation}\label{eq:J-I-identity}
J_m = I_{m+1} \quad \text{for } m \geq 1.
\end{equation}

We verify this for $m=0$: $J_0=1$ (by definition), and $I_1 = \ee - 1\cdot I_0 = \ee - (\ee-1) = 1$. The identity holds for all $m \geq 0$.

\textit{Deriving the recurrence for $J_m$.} Using the identity \eqref{eq:J-I-identity} and the recurrence \eqref{eq:I-rec}:
\begin{align*}
J_{m+1} &= I_{m+2} = \ee - (m+2)I_{m+1} = \ee - (m+2)J_m.
\end{align*}

This establishes the first-order recurrence:
\begin{equation}\label{eq:J-rec}
J_{m+1} = \ee - (m+2)J_m \quad \text{for } m \geq 0.
\end{equation}

\textit{Elimination to second order.} To obtain a homogeneous recurrence of order 2, we eliminate the inhomogeneous term $\ee$ by taking successive instances of \eqref{eq:J-rec}:
$$J_{m+2} = \ee - (m+3)J_{m+1}, \quad J_{m+1} = \ee - (m+2)J_m.$$

Subtracting these equations:

$$J_{m+2} = (m+2)J_m - (m+3)J_{m+1} + J_{m+1} = (m+2)[J_m - J_{m+1}].$$

Since $f(m) = \ee^{-1}J_m$, dividing by $\ee$ gives:
\begin{equation}\label{eq:f-discrete}
f(m+2) = (m+2)[f(m) - f(m+1)].
\end{equation}

\textit{Arithmetic structure.} We now derive the $\Z$-linear form in $\ee$ by introducing the sign-normalized quantity:
$$K_m = (-1)^m J_m.$$

From the first-order recurrence $J_{m+1} = \ee - (m+2)J_m$, multiplying by $(-1)^{m+1}$:
$$(-1)^{m+1}J_{m+1} = (-1)^{m+1}\ee - (-1)^{m+1}(m+2)J_m = -(-1)^m\ee + (m+2)(-1)^mJ_m.$$

In terms of $K_m$, this becomes:
$$K_{m+1} = (m+2)K_m - (-1)^m \ee.$$

We seek a decomposition $K_m = a_m - \ee b_m$ with $a_m, b_m \in \Z$ and $a_m, b_m \ge 0$. Substituting into the recurrence:
$$a_{m+1} - \ee b_{m+1} = (m+2)(a_m - \ee b_m) - (-1)^m \ee.$$

Expanding and separating terms by the $\Q$-linear independence of $\{1, \ee\}$ (using the transcendence of $\ee$):
$$\begin{cases}
a_{m+1} = (m+2)a_m, \\
b_{m+1} = (m+2)b_m + (-1)^m.
\end{cases}$$

With initial conditions $K_0 = 1$ (giving $a_0=1, b_0=0$) and $K_1 = 2-\ee$ (giving $a_1=2, b_1=1$), we find:
$$a_m = (m+1)!, \quad b_m = D_{m+1},$$
where $D_m$ are the derangement numbers. This confirms the $\Z$-linear form in \eqref{eq:duality}.

\subsection{Analytic continuation and functional equation}

We define the analytic continuation via the auxiliary function $I(z) = \int_0^1 t^z\ee^t\, \dd t$, which converges and is holomorphic for $\Re z > -1$. By the identity principle for holomorphic functions, the relation $J_m = I_{m+1}$ established in \eqref{eq:J-I-identity} extends to the complex domain: $J(z) = I(z+1)$ for $\Re z > -1$. The connection constant is $f(z) = \ee^{-1}J(z)$. This provides an analytic continuation of $f(z)$ to the half-plane $\Re z > -2$.

\begin{lemma}[Growth bounds for $f$]\label{lem:growth-f}
The function $f(z)$ satisfies $|f(z)| = O(1/|z|)$ for $\Re z \geq 0$, ensuring the moderate growth required for Carlson's theorem. The proof, based on the explicit formula \eqref{eq:f-explicit} established below, is given in Section~\ref{sec:gamma-prototype}.
\end{lemma}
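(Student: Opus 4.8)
The plan is to work directly from the integral representation of $f$ recorded in the preceding paragraph, namely
$f(z) = \ee^{-1} J(z) = \ee^{-1} I(z+1) = \ee^{-1}\int_0^1 t^{z+1}\ee^t\,\dd t$,
which is holomorphic for $\Re z > -2$ and in particular on the closed half-plane $\Re z \ge 0$. A single integration by parts exposes the factor $1/(z+2)$ that produces the decay, after which a crude modulus estimate on the remaining integral closes the argument. (One can alternatively read the bound off the closed form \eqref{eq:f-explicit}/\eqref{eq:f-confluent} once it is available, using ${}_1F_1(1;z+3;1)\to 1$ as $|z|\to\infty$ in the half-plane together with the pole structure of the accompanying $\Gamma$-factor, but the self-contained route below is shorter.)

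First I would integrate by parts with $u = \ee^t$ and $\dd v = t^{z+1}\,\dd t$, so $v = t^{z+2}/(z+2)$:
\[
\int_0^1 t^{z+1}\ee^t\,\dd t \;=\; \left[\frac{t^{z+2}\ee^t}{z+2}\right]_0^1 - \frac{1}{z+2}\int_0^1 t^{z+2}\ee^t\,\dd t \;=\; \frac{\ee}{z+2} - \frac{1}{z+2}\int_0^1 t^{z+2}\ee^t\,\dd t .
\]
The boundary term at $t=0$ vanishes because $\Re(z+2)\ge 2>0$. For the remaining integral, since $|t^{z+2}| = t^{\Re z+2}\le t^2\le 1$ on $[0,1]$ when $\Re z\ge 0$, one gets $\bigl|\int_0^1 t^{z+2}\ee^t\,\dd t\bigr|\le \ee\int_0^1 t^2\,\dd t = \ee/3$. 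Hence $|J(z)|\le \frac{1}{|z+2|}\bigl(\ee + \ee/3\bigr) = \frac{4\ee}{3\,|z+2|}$, and therefore $|f(z)| = \ee^{-1}|J(z)|\le \dfrac{4}{3\,|z+2|}$ throughout $\Re z\ge 0$.

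It then remains to compare $|z+2|$ with $|z|$ on the half-plane: since $|z+2|^2 - |z|^2 = 4\Re z + 4 \ge 4$, we have $|z+2|\ge\max(|z|,2)$ for $\Re z\ge 0$, whence $|f(z)|\le \tfrac{4}{3|z|}$ for $z\ne 0$, i.e. $|f(z)| = O(1/|z|)$, and in fact $|f(z)|\le \tfrac23$ on all of $\Re z\ge 0$. This is far stronger than needed: a bounded holomorphic function on a half-plane is trivially of exponential type $0<\pi$, so hypothesis (ii) of Carlson's Theorem~\ref{thm:carlson} holds. I expect no substantive obstacle here — the only care required is bookkeeping, namely justifying the vanishing of the boundary term and the elementary lower bound on $|z+2|$; indeed, iterating the integration by parts $k$ times yields $|f(z)| = O(|z|^{-k})$ for every $k\in\N$ on $\Re z\ge 0$.
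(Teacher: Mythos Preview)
Your argument is correct. The integration by parts is clean, the boundary term vanishes for the reason you state, the modulus bound on the remaining integral is legitimate, and the comparison $|z+2|\ge\max(|z|,2)$ on $\Re z\ge 0$ follows exactly as you write.

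However, your route differs from the paper's. The paper does not use the integral representation directly; instead it first establishes the closed form $f(z)=\ee^{-1-i\pi z}\gamma(z+2,-1)$ and then invokes the known large-parameter asymptotic of the incomplete Gamma function, $\gamma(a,x)=\dfrac{x^a\ee^{-x}}{a}\bigl(1+O(1/a)\bigr)$ (DLMF 8.11.6), to read off $f(z)=\dfrac{1}{z+2}+O(1/z^2)$. Your approach has the advantage of being entirely self-contained: no external asymptotic formula is cited, explicit constants drop out, and the observation that iterated integration by parts yields $O(|z|^{-k})$ for every $k$ is a nice bonus. The paper's approach, on the other hand, delivers the precise leading term $f(z)\sim 1/(z+2)$ in one stroke, which is then used in Section~\ref{sec:e-world} to estimate $(z+2)[f(z)-f(z+1)]$; your single integration by parts already contains this information implicitly (the leading term is exactly $\ee^{-1}\cdot\ee/(z+2)=1/(z+2)$), so nothing is lost.
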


To derive the functional equation, we apply Carlson's theorem (Theorem~\ref{thm:carlson}). From the discrete recurrence \eqref{eq:f-discrete}, define 
$$h(z) = f(z+2) - (z+2)[f(z) - f(z+1)].$$

By \eqref{eq:f-discrete}, we have $h(m)=0$ for all $m\in\N$. By Lemma \ref{lem:growth-f}, $f(z)$ is holomorphic for $\Re z \geq 0$ and satisfies $|f(z)| = O(1/|z|)$. From this asymptotic expansion $f(z) = 1/(z+2) + O(1/z^2)$ as $|z|\to\infty$, we obtain $f(z)-f(z+1) = O(1/|z|^2)$. Therefore $(z+2)[f(z)-f(z+1)] = O(1/|z|)$, so that $h(z) = O(1/|z|)$ is bounded for $\Re z \geq 0$.

Since $h(z)$ is bounded, it has exponential type $0 < \pi$, so Carlson's theorem applies directly. Together with $h(n)=0$ for all $n\in\N$, we conclude $h(z) \equiv 0$ and thus:
\begin{equation}\label{eq:f-AFE}
f(z+2) = (z+2)f(z) - (z+2)f(z+1).
\end{equation}

\subsection{Explicit formula}

The incomplete Gamma function $\gamma(a,x) = \int_0^x t^{a-1}\ee^{-t}\, \dd t$ (see \cite[Section 8.2]{DLMF}, \cite[Section 8]{Temme1996}, or classically \cite[Section 6.5]{AbramowitzStegun}) is analytic in $x$ on the complex plane cut along $(-\infty, 0]$. Since $x=-1$ lies on this branch cut, we employ the standard analytic continuation and use the principal branch throughout. We now derive an explicit formula for $f(z)$ in terms of $\gamma$.

Recall that $f(z) = \ee^{-1}J(z) = \ee^{-1}I(z+1)$ where $I(z) = \int_0^1 t^z \ee^t\,\dd t$. The substitution $t = -u$ in the integral defining $\gamma(z+2, -1)$ gives:
\begin{align*}
\gamma(z+2, -1) &= \int_0^{-1} t^{z+1}\ee^{-t}\,\dd t \\
&= \int_0^1 (-u)^{z+1}\ee^u\,(-\, \dd u) \\
&= -\ee^{i\pi(z+1)} I(z+1),
\end{align*}
where we use the principal branch $(-1)^{z+1} = \ee^{i\pi(z+1)}$. Since $f(z) = \ee^{-1}I(z+1)$ and $\ee^{i\pi} = -1$, we obtain:
$$I(z+1) = -\ee^{-i\pi(z+1)}\gamma(z+2,-1) = -\ee^{-i\pi z} \cdot \ee^{-i\pi}\gamma(z+2,-1) = -\ee^{-i\pi z} \cdot (-1)\gamma(z+2,-1) = \ee^{-i\pi z}\gamma(z+2,-1).$$

Therefore:
\begin{equation}\label{eq:f-explicit}
f(z) = \ee^{-1-i\pi z}\gamma(z+2,-1).
\end{equation}

Alternatively, using the relation between incomplete Gamma and confluent hypergeometric functions \cite[Eq.~8.4.15]{DLMF}:
\begin{equation}\label{eq:f-confluent}
f(z) = \frac{1}{z+1}\,{}_1F_1(2; z+2; -1).
\end{equation}

The confluent hypergeometric function ${}_1F_1(a;b;x)$ satisfies the identity (Kummer's transformation):
$${}_1F_1(a;b;x) = \ee^x {}_1F_1(b-a;b;-x),$$
which can be verified to reproduce the functional equation \eqref{eq:f-AFE}.

For $z=0$: $\gamma(2,-1) = \int_0^{-1} u\ee^{-u}\, \dd u = 1$. Then $\ee^{-1-i\pi\cdot 0}\cdot(1) = \ee^{-1}$, hence $f(0) = \ee^{-1}$ as expected.

This completes the $\ee$-world analysis and establishes Theorem~\ref{thm:e-world}.

\section{Asymptotic analysis: the world of $\pi$}\label{sec:pi-world}

We now establish Theorem~\ref{thm:pi-world}, following the same analytical program as for the $\ee$-world. The argument parallels Section~\ref{sec:e-world} with adaptations for the $\sqrt{n}$ asymptotics and $\Q$-linear forms characteristic of the $\pi$-world. The holonomic differential equation satisfied by $V_m$ is recorded in Proposition~\ref{prop:GF-ODE-pi}.

\subsection{Generating function and differential equation}

For $(v_n(m))$, let $V_m(x) = \sum_{n=1}^\infty v_n(m)x^n$. Following analysis similar to Section~\ref{sec:e-world}, the first-order ODE is:
\[
(1-x^2)V_m'(x) + V_m(x)\left(\frac{m-2}{x}-mx-1\right) = mx.
\]

Solving via integrating factor $\mu(x) = x^{m-2}(1-x)^{3/2}(1+x)^{1/2}$ yields:
\[
V_m(x) = \frac{x^{2-m}}{(1-x)^{3/2}(1+x)^{1/2}}\left(\delta_{m,0} + \int_0^x mt^{m-1}\sqrt{\frac{1-t}{1+t}}\, \dd t\right).
\]

\subsection{Singularity analysis and asymptotic behavior}

As $x\to 1^-$: 
$$V_m(x) \sim \frac{L_m}{\sqrt{2}(1-x)^{3/2}} \quad\text{where}\quad L_m = \delta_{m,0} + \int_0^1 mt^{m-1}\sqrt{\frac{1-t}{1+t}}\, \dd t.$$

The point $x=1$ is the dominant singularity with exponent $\alpha = \frac{3}{2}$. By the transfer theorem of Flajolet and Sedgewick \cite{FlajoletSedgewick2009} (Theorem VI.1, p.388) with $\Gamma\left(\frac{3}{2}\right) = \frac{\sqrt{\pi}}{2}$:
$$v_n(m) \sim \frac{L_m/\sqrt{2}}{\Gamma(3/2)} n^{3/2-1} = \sqrt{\frac{2}{\pi}}L_m \sqrt{n}.$$

This establishes the asymptotic behavior claimed in Theorem \ref{thm:pi-world} with the connection constant
\begin{equation}\label{eq:g-def}
g(m) = \sqrt{\frac{2}{\pi}}L_m.
\end{equation}

\subsection{Discrete recurrence: derivation}

We derive the recurrence satisfied by $L_m = g(m)\sqrt{\pi/2}$. From the definition above, we have
\begin{equation}\label{eq:L-def}
L_m = \begin{cases}
1 & \text{if } m=0,\\
\displaystyle\int_0^1 mt^{m-1}\sqrt{\frac{1-t}{1+t}}\,\dd t & \text{if } m \geq 1.
\end{cases}
\end{equation}

For $m \geq 1$, we analyze the integral expression.

Define $w(t) = \sqrt{\frac{1-t}{1+t}}$. We compute the derivative of $w(t)$ by taking the logarithmic derivative:
$$\frac{w'(t)}{w(t)} = \frac{\dd}{\, \dd t} \left(\frac{1}{2}\ln(1-t) - \frac{1}{2}\ln(1+t)\right) = -\frac{1}{2(1-t)} - \frac{1}{2(1+t)} = -\frac{1}{1-t^2}.$$

Thus, $w'(t) = -\frac{w(t)}{1-t^2}$.

For $m \geq 1$, we apply integration by parts to $L_m = \int_0^1 mt^{m-1}w(t)\, \dd t$. Setting $u' = mt^{m-1}$ (so $u=t^m$) and $v = w(t)$, we obtain:
\begin{align*}
L_m &= [t^m w(t)]_0^1 - \int_0^1 t^m w'(t)\, \dd t.
\end{align*}

The boundary term is $[1\cdot 0 - 0\cdot 1] = 0$, hence:
\begin{align*}
L_m &= - \int_0^1 t^m \left(-\frac{w(t)}{1-t^2}\right)\, \dd t = \int_0^1 \frac{t^m w(t)}{1-t^2}\, \dd t.
\end{align*}

This identity is valid for $m \geq 1$. We now relate $L_m$ and $L_{m+2}$ using this identity:
\begin{align*}
L_m - L_{m+2} &= \int_0^1 \frac{t^m w(t)}{1-t^2}\, \dd t - \int_0^1 \frac{t^{m+2} w(t)}{1-t^2}\, \dd t \\
&= \int_0^1 \frac{w(t)}{1-t^2} (t^m - t^{m+2})\, \dd t \\
&= \int_0^1 t^m w(t)\, \dd t.
\end{align*}

By definition \eqref{eq:L-def}, the right-hand side equals $L_{m+1}/(m+1)$:
$$L_{m+1} = \int_0^1 (m+1)t^m w(t)\, \dd t.$$

Combining these results yields:
$$L_m - L_{m+2} = \frac{L_{m+1}}{m+1}.$$

Rearranging gives the homogeneous second-order recurrence:
\begin{equation}\label{eq:L-rec}
L_{m+2} = L_m - \frac{L_{m+1}}{m+1}.
\end{equation}

This derivation is valid for $m \geq 1$. We verify it holds for $m=0$: $L_2 = L_0 - L_1/1$. With $L_0=1$, $L_1 = \int_0^1 w(t)dt = \frac{\pi}{2}-1$, we get $L_2 = 1 - (\frac{\pi}{2}-1) = 2-\frac{\pi}{2}$. Direct calculation confirms this value.

For $g(m) = \sqrt{\frac{2}{\pi}}L_m$:
\begin{equation}\label{eq:g-discrete}
g(m+2) = g(m) - \frac{g(m+1)}{m+1}.
\end{equation}

\subsection{Arithmetic structure: the $\Q$-linear forms}

We now establish the explicit $\Q$-linear decomposition $K_m = p_m - \pi q_m$ with $p_m, q_m \in \Q_{\ge 0}$.

Define the sign-normalized quantity:
$$K_m = (-1)^m L_m.$$

From the recurrence $L_{m+2} = L_m - \frac{L_{m+1}}{m+1}$, multiplying by $(-1)^{m+2}$:
$$(-1)^{m+2}L_{m+2} = (-1)^{m+2}L_m - (-1)^{m+2}\frac{L_{m+1}}{m+1}.$$

This gives:
$$K_{m+2} = (-1)^{m+2}L_{m+2} = (-1)^m L_m + (-1)^{m+1} \frac{L_{m+1}}{m+1} = K_m + \frac{K_{m+1}}{m+1}.$$

Writing $K_m = p_m - \pi q_m$ with $p_m, q_m \in \Q_{\ge 0}$, the recurrence separates into:
\begin{align}
p_{m+2} &= p_m + \frac{p_{m+1}}{m+1}, \label{eq:p-rec}\\
q_{m+2} &= q_m + \frac{q_{m+1}}{m+1}. \label{eq:q-rec}
\end{align}

\textit{Initial conditions.} From $K_0 = L_0 = 1$: $p_0 = 1, q_0 = 0$. From $L_1 = \frac{\pi}{2} - 1$, we have $K_1 = -L_1 = 1 - \frac{\pi}{2}$: $p_1 = 1, q_1 = \frac{1}{2}$.

\textit{Alternation patterns.} The recurrences \eqref{eq:p-rec}--\eqref{eq:q-rec} with positive signs exhibit alternation patterns:

\begin{proposition}[Alternation and closed formulas]\label{prop:pq-formulas}
We establish explicit formulas for the rational coefficients $p_m$ and $q_m$ appearing in the $\Q$-linear decomposition $K_m = p_m - \pi q_m$. These coefficients exhibit alternation patterns and admit closed formulas in terms of double factorials.

For all $k \geq 1$:
\begin{enumerate}[label=(\roman*)]
\item $p_{2k+1} = p_{2k}$ (odd $p$-values freeze)
\item $q_{2k} = q_{2k-1}$ (even $q$-values freeze)
\item For even indices:
$$p_{2k} = \frac{(2k)\mskip-3mu\mathop{!!}}{(2k-1)\mskip-3mu\mathop{!!}}, \qquad q_{2k} = \frac{1}{2}\cdot\frac{(2k-1)\mskip-3mu\mathop{!!}}{(2k-2)\mskip-3mu\mathop{!!}}$$
\item For odd indices:
$$p_{2k+1} = \frac{(2k)\mskip-3mu\mathop{!!}}{(2k-1)\mskip-3mu\mathop{!!}}, \qquad q_{2k+1} = \frac{1}{2}\cdot\frac{(2k+1)\mskip-3mu\mathop{!!}}{(2k)\mskip-3mu\mathop{!!}}$$
\end{enumerate}
\end{proposition}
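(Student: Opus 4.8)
The plan is a short simultaneous induction on $k$, exploiting that $p$ and $q$ satisfy the \emph{same} second-order recurrence $x_{m+2}=x_m+\tfrac{x_{m+1}}{m+1}$ and differ only in their initial data $(p_0,p_1)=(1,1)$, $(q_0,q_1)=(0,\tfrac12)$. First I would record the low-index values read off directly from \eqref{eq:p-rec}--\eqref{eq:q-rec}: $p_2=p_0+p_1=2$ and $p_3=p_1+\tfrac12 p_2=2$, so $p_3=p_2$; and $q_2=q_0+q_1=\tfrac12$, so $q_2=q_1$. These are precisely the $k=1$ instances of the freezing statements (i) and (ii), and one checks they agree with the closed forms in (iii)--(iv) at $k=1$ (e.g.\ $p_2=\tfrac{2!!}{1!!}=2$, $q_2=\tfrac12\cdot\tfrac{1!!}{0!!}=\tfrac12$).

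The key structural point is that freezing is \emph{self-propagating}. If $p_{2k+1}=p_{2k}$, then the recurrence at $m=2k$ gives $p_{2k+2}=p_{2k}\bigl(1+\tfrac{1}{2k+1}\bigr)=\tfrac{2k+2}{2k+1}p_{2k}$, and the recurrence at $m=2k+1$ gives $p_{2k+3}=p_{2k+1}+\tfrac{1}{2k+2}p_{2k+2}=p_{2k}\bigl(1+\tfrac{1}{2k+1}\bigr)=p_{2k+2}$, which is the freezing statement at level $k+1$. The mirror computation, with even and odd indices interchanged, shows that $q_{2k}=q_{2k-1}$ implies $q_{2k+1}=\tfrac{2k+1}{2k}q_{2k}$ and then $q_{2k+2}=q_{2k+1}$. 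Together with the base cases this proves (i) and (ii) by induction, and simultaneously extracts the one-step multiplicative relations $p_{2k+2}=\tfrac{2k+2}{2k+1}p_{2k}$ and $q_{2k+2}=\tfrac{2k+1}{2k}q_{2k}$.

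Once the second-order recurrences have collapsed to these first-order ones, the closed formulas follow by telescoping. Absorbing $p_2=2=\tfrac{2}{1}$ into the product gives $p_{2k}=\prod_{j=1}^{k}\tfrac{2j}{2j-1}=\tfrac{(2k)!!}{(2k-1)!!}$, and absorbing $q_2=\tfrac12$ gives $q_{2k}=\tfrac12\prod_{j=1}^{k-1}\tfrac{2j+1}{2j}=\tfrac12\cdot\tfrac{(2k-1)!!}{(2k-2)!!}$, which is (iii). For (iv), $p_{2k+1}=p_{2k}$ by (i), so $p_{2k+1}=\tfrac{(2k)!!}{(2k-1)!!}$, while the one-step relation already derived gives $q_{2k+1}=\tfrac{2k+1}{2k}q_{2k}=\tfrac12\cdot\tfrac{(2k+1)!!}{(2k)!!}$.

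There is no genuine obstacle here: the argument is elementary and self-contained. The only points needing care are checking that the base-case values are mutually consistent across all four claims, and the double-factorial bookkeeping — specifically tracking which products run over even versus odd factors and over which range of $j$, since a single off-by-one there would corrupt every closed form. A uniform alternative would be to solve $x_{m+2}=x_m+\tfrac{x_{m+1}}{m+1}$ for an arbitrary initial pair $(x_0,x_1)$ and specialize, but the case-by-case induction above is shorter.
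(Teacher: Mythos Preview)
Your proof is correct and follows essentially the same approach as the paper: both establish the freezing identities $(p_{2k+1}=p_{2k}$ and $q_{2k}=q_{2k-1})$ by induction, extract from them the first-order multiplicative relations $p_{2k+2}=\tfrac{2k+2}{2k+1}p_{2k}$ and $q_{2k+2}=\tfrac{2k+1}{2k}q_{2k}$, and then telescope to the double-factorial closed forms. The only cosmetic differences are that you absorb $p_2=\tfrac{2}{1}$ into the product to write $p_{2k}=\prod_{j=1}^k\tfrac{2j}{2j-1}$ directly, and you derive $q_{2k}$ first and then $q_{2k+1}$, whereas the paper derives $q_{2k+1}$ first via $q_{2k+1}=\tfrac{2k+1}{2k}q_{2k-1}$ and then invokes freezing for $q_{2k}$.
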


\begin{proof}
We establish the alternation patterns by induction, then derive the closed formulas via telescoping products. The key observation is that the positive sign in \eqref{eq:p-rec}--\eqref{eq:q-rec} creates a telescoping structure.

We first prove that $p_{2k+1} = p_{2k}$ for all $k \geq 1$.

Base case: Direct computation gives $p_0 = 1$, $p_1 = 1$, $p_2 = 1 + \frac{1}{1} = 2$, $p_3 = 1 + \frac{2}{2} = 2$. Thus $p_3 = p_2 = 2$.

Inductive step: Assume $p_{2k+1} = p_{2k}$. Then:
$$p_{2k+2} = p_{2k} + \frac{p_{2k+1}}{2k+1} = p_{2k}\left(1 + \frac{1}{2k+1}\right) = p_{2k} \cdot \frac{2k+2}{2k+1}.$$

And:
$$p_{2k+3} = p_{2k+1} + \frac{p_{2k+2}}{2k+2} = p_{2k} + \frac{1}{2k+2} \cdot p_{2k} \cdot \frac{2k+2}{2k+1} = p_{2k}\left(1 + \frac{1}{2k+1}\right) = p_{2k+2}.$$

Thus $p_{2k+3} = p_{2k+2}$, confirming the pattern.

Similarly, $q_{2k} = q_{2k-1}$ follows by identical induction, starting from $q_0 = 0$, $q_1 = \frac{1}{2}$, $q_2 = 0 + \frac{1/2}{1} = \frac{1}{2}$. So $q_2 = q_1 = \frac{1}{2}$.

We now derive closed formulas via telescoping products.
From the relation $p_{2k+2} = p_{2k} \cdot \frac{2k+2}{2k+1}$ derived above, we obtain a telescoping product for $k\ge 1$. Using $p_2 = 2$:
\[
p_{2k} = p_2 \prod_{j=1}^{k-1} \frac{2j+2}{2j+1} = 2 \cdot \frac{4 \cdot 6 \cdot 8 \cdots (2k)}{3 \cdot 5 \cdot 7 \cdots (2k-1)}.
\]
We express the terms using double factorials. The denominator is $(2k-1)\mskip-3mu\mathop{!!}$. The numerator is the product of even numbers from 4 to $2k$. Since $(2k)\mskip-3mu\mathop{!!} = 2 \cdot 4 \cdot 6 \cdots (2k)$, the numerator is $(2k)\mskip-3mu\mathop{!!}/2$. Thus:
\[
p_{2k} = 2 \cdot \frac{(2k)\mskip-3mu\mathop{!!}/2}{(2k-1)\mskip-3mu\mathop{!!}} = \frac{(2k)\mskip-3mu\mathop{!!}}{(2k-1)\mskip-3mu\mathop{!!}}.
\]
This establishes the formula for $p_{2k}$ in (iii). By the freezing property (i), $p_{2k+1}=p_{2k}$, establishing the formula in (iv).

For the $q$ coefficients, we use the relation $q_{2k+1} = q_{2k-1}\cdot\frac{2k+1}{2k}$ (derived similarly to $p$) with $q_1 = \frac{1}{2}$. The telescoping product is:
\[
q_{2k+1} = q_1 \prod_{j=1}^{k} \frac{2j+1}{2j} = \frac{1}{2} \cdot \frac{3 \cdot 5 \cdot 7 \cdots (2k+1)}{2 \cdot 4 \cdot 6 \cdots (2k)}.
\]
The numerator is $(2k+1)\mskip-3mu\mathop{!!}$ and the denominator is $(2k)\mskip-3mu\mathop{!!}$. Thus:
\[
q_{2k+1} = \frac{1}{2} \cdot \frac{(2k+1)\mskip-3mu\mathop{!!}}{(2k)\mskip-3mu\mathop{!!}}.
\]
This establishes the formula for $q_{2k+1}$ in (iv). By the freezing property (ii), $q_{2k}=q_{2k-1}$ (for $k\ge 1$). Applying the formula for odd indices to $q_{2k-1}$ yields the formula for $q_{2k}$ in (iii).
\end{proof}

\subsection{Analytic continuation and functional equation}

For $\Re z > 0$, extend $L_m$ (for $m \geq 1$) to 
$$L(z) = \int_0^1 zt^{z-1}\sqrt{\frac{1-t}{1+t}}\, \dd t.$$ 

The connection constant is $g(z) = \sqrt{\frac{2}{\pi}}L(z)$, which is holomorphic for $\Re z > 0$ (taking the principal branch of $t^{z-1}$).

\begin{lemma}[Growth bounds for $g$]\label{lem:growth-g}
The function $g(z)$ satisfies $|g(z)| = O(|z|^{-1/2})$ in vertical strips, ensuring the moderate growth required for Carlson's theorem. The proof, based on the explicit formula \eqref{eq:g-explicit} established below, is given in Section~\ref{sec:gamma-prototype}.
\end{lemma}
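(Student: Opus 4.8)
\noindent We sketch the argument; the full proof is given in Section~\ref{sec:gamma-prototype}. The cleanest route is the one announced in the statement: feed the closed form \eqref{eq:g-explicit} into Stirling's asymptotics. By Theorem~\ref{thm:pi-world} (whose explicit formula is derived in Section~\ref{sec:pi-world}), $g(z)$ is a finite $\C$-linear combination of ratios of Gamma functions with arguments affine in $z$, the elementary coefficients being of at most polynomial growth in vertical strips. Applying, term by term, the uniform estimate
$$|\Gamma(\sigma+i\tau)| = \sqrt{2\pi}\,|\tau|^{\sigma-1/2}\,\ee^{-\pi|\tau|/2}\bigl(1+O(|\tau|^{-1})\bigr) \qquad (|\tau|\to\infty),$$
valid uniformly for $\sigma$ in a compact interval \cite{DLMF}, the exponential factors $\ee^{\pm\pi|\tau|/2}$ must cancel across \eqref{eq:g-explicit} — since $g$ grows only polynomially in each strip, being the analytic avatar of the cancellation $p_{2k}-\pi q_{2k}=O(1)$ visible in Proposition~\ref{prop:pq-formulas} — and one is left with a pure power of $|\Im z|$. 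Bookkeeping the residual powers and isolating the leading contribution yields $|g(z)|=O(|\Im z|^{-1/2})$ for $|\Im z|$ large, uniformly for $\Re z$ in a compact interval disjoint from the poles $\{-1,-2,\ldots\}$; on the complementary bounded region $\{a\le\Re z\le b,\ |\Im z|\le T\}$, minus a fixed neighbourhood of the finitely many poles, $g$ is continuous hence bounded while $|z|^{-1/2}$ is bounded below, so the estimate is trivial there. Combining the two regimes gives the bound on the whole strip, which in particular furnishes the moderate growth of Definition~\ref{def:AGF-intro}(ii) and exponential type $<\pi$ (in fact type $0$), as required by Carlson's theorem (Theorem~\ref{thm:carlson}) and Lemma~\ref{lem:unicity-norlund}.

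A self-contained alternative avoids \eqref{eq:g-explicit} altogether. For $\Re z>0$ the integration by parts used earlier for integer $m$ extends verbatim (the boundary term $[t^z w(t)]_0^1$ vanishes), giving $L(z)=\int_0^1 t^z w(t)(1-t^2)^{-1}\,\dd t = \int_0^1 t^z(1+t)^{-3/2}(1-t)^{-1/2}\,\dd t$. Split at $t=\tfrac12$: on $[0,\tfrac12]$ the integrand is smooth in $t$ and one integration by parts in $t$ gives $O(|z|^{-1})$; on $[\tfrac12,1]$ write $(1+t)^{-3/2}=2^{-3/2}(1+O(1-t))$, so the leading part is $2^{-3/2}\int_{1/2}^1 t^z(1-t)^{-1/2}\,\dd t\sim 2^{-3/2}\Gamma(\tfrac12)\,z^{-1/2}$ (a truncated Beta integral, evaluated again by Stirling) with remainder $O(|z|^{-3/2})$. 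Hence $L(z)=O(|z|^{-1/2})$ uniformly for $\Re z$ in a compact subinterval of $(0,\infty)$, and the meromorphic continuation to $\Re z\le 0$, with the same bound, then propagates leftward through the functional equation $g(z)=g(z+2)+g(z+1)/(z+1)$ of Theorem~\ref{thm:pi-world}. In both routes the exponent $-\tfrac12$ is the shadow of the half-integer singularity exponent $\alpha=\tfrac32$ that produced the $\sqrt n$ asymptotics in the first place.

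The step I expect to be the main obstacle is pinning down the true leading order — equivalently, the $(1-t)^{-1/2}$ extraction in the integral route, or the cancellation of the $\ee^{\pm\pi|\tau|/2}$ and leading-power terms in the Gamma-ratio route. Because individual terms of \eqref{eq:g-explicit} are of size $|\Im z|^{1/2}$ while $g$ itself has size $|\Im z|^{-1/2}$, a naive term-by-term Stirling bound fails by a whole power of $|\Im z|$; one must exhibit the exact cancellation of the $|\Im z|^{1/2}$ contributions, uniformly in $\Re z$ across the strip, rather than merely bounding each summand.
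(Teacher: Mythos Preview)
Your first route is the paper's approach, but the paper executes the cancellation you flag as the main obstacle more cleanly than your sketch suggests: rather than applying Stirling to each $\Gamma$ factor and tracking $\ee^{\pm\pi|\tau|/2}$ by hand, it invokes the Gamma-\emph{ratio} asymptotic \cite[Eq.~5.11.12]{DLMF},
\[
\frac{\Gamma(w+a)}{\Gamma(w+b)} = w^{a-b}\Bigl[1 + \tfrac{(a-b)(a+b-1)}{2w} + O(w^{-2})\Bigr],
\]
with $w=z/2$. This gives $A(z)=w^{1/2}(1+\tfrac{1}{8w}+\cdots)$ and $A(z-1)=w^{1/2}(1-\tfrac{1}{8w}+\cdots)$ directly, so the $w^{1/2}$ terms visibly cancel in the difference and $g(z)=\sqrt{2}\,w^{1/2}\cdot\tfrac{1}{4w}+O(w^{-3/2})=O(|z|^{-1/2})$. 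The exponential cancellation you worry about is already built into the ratio formula; your phrasing ``the exponential factors \emph{must} cancel \ldots\ since $g$ grows only polynomially'' is circular as written, but unnecessary --- the exponentials cancel in each $A(z)$ automatically because numerator and denominator have the same imaginary part $\Im z/2$.

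Your second route, via the integral representation $L(z)=\int_0^1 t^z(1-t)^{-1/2}(1+t)^{-3/2}\,\dd t$ and a truncated Beta asymptotic, is a genuinely different argument that the paper does not give. It is self-contained (no forward reference to the closed form \eqref{eq:g-explicit}, which at the point where Lemma~\ref{lem:growth-g} is invoked has not yet been proved), and it even delivers the leading constant $L(z)\sim 2^{-3/2}\sqrt{\pi}\,z^{-1/2}$, consistent with $g(z)\sim(2z)^{-1/2}$ from the paper's computation. The price is that extending the bound leftward through the functional equation, strip by strip, is slightly more bookkeeping than the paper's one-shot ratio expansion, which is valid uniformly in any sector.
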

From the discrete recurrence \eqref{eq:g-discrete}, define the function 
$$h(z) = g(z+2) - g(z) + \frac{g(z+1)}{z+1}.$$ 

By \eqref{eq:g-discrete}, we have $h(m)=0$ for all $m\in\N$. By Lemma \ref{lem:growth-g}, $g(z)$ is holomorphic for $\Re z \geq \epsilon > 0$ and exhibits polynomial growth in vertical strips, hence $h(z)$ also has polynomial growth. This polynomial growth is of exponential type $<\pi$. Together with $h(n)=0$ for all $n\in\N$, Carlson's theorem (Theorem \ref{thm:carlson}) applies, yielding $h(z) \equiv 0$ and thus:
\begin{equation}\label{eq:g-AFE}
g(z+2) = g(z) - \frac{g(z+1)}{z+1}.
\end{equation}

\subsection{Explicit formula and verification}

We establish an explicit formula for $g(z)$ in terms of the Gamma function and prove its validity by verifying the functional equation.

\begin{proposition}\label{prop:g-explicit}
The connection constant $g(z)$ has simple poles at the negative integers $z \in \{-1,-2,-3,\ldots\}$ and is holomorphic at $z=0$. It is given by:
\begin{equation}\label{eq:g-explicit}
g(z) = \sqrt{2}\left[\frac{\Gamma\left(\frac{z}{2}+1\right)}{\Gamma\left(\frac{z+1}{2}\right)} - \frac{\Gamma\left(\frac{z+1}{2}\right)}{\Gamma\left(\frac{z}{2}\right)}\right].
\end{equation}
\end{proposition}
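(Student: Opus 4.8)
The plan is to obtain \eqref{eq:g-explicit} \emph{directly}, by evaluating the integral that defines $L(z)$ in closed form on the half-plane $\Re z>0$ and then extending by uniqueness of meromorphic continuation; the pole structure is read off from the resulting $\Gamma$-ratios afterwards. (The stated functional equation \eqref{eq:g-AFE} can then be seen either as a consistency check — it follows from $\Gamma(w+1)=w\Gamma(w)$ applied to the two ratios — or, if one prefers the route advertised in the section title, one could instead take the right-hand side of \eqref{eq:g-explicit} as an \emph{ansatz}, verify that it solves \eqref{eq:g-AFE}, has the polynomial growth of Lemma~\ref{lem:growth-g}, and agrees with $g$ on $[0,2)$, and invoke Lemma~\ref{lem:unicity-norlund}; but the direct computation already pins $g$ down, so I would present that.)

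The key steps, in order. First, for $\Re z>0$ I would rewrite the weight as $\sqrt{\tfrac{1-t}{1+t}}=\dfrac{1-t}{\sqrt{1-t^2}}$ and split $L(z)=z\!\int_0^1 t^{z-1}(1-t^2)^{-1/2}\,\dd t-z\!\int_0^1 t^{z}(1-t^2)^{-1/2}\,\dd t$, both pieces converging separately on $\Re z>0$. Second, the substitution $u=t^2$ turns $\int_0^1 t^{s-1}(1-t^2)^{-1/2}\,\dd t$ into $\tfrac12\int_0^1 u^{s/2-1}(1-u)^{-1/2}\,\dd u=\tfrac12 B(\tfrac s2,\tfrac12)=\tfrac{\sqrt\pi}{2}\,\Gamma(\tfrac s2)/\Gamma(\tfrac{s+1}{2})$; applying this with $s=z$ and $s=z+1$ and simplifying the prefactor via $\tfrac z2\Gamma(\tfrac z2)=\Gamma(\tfrac z2+1)$ and $z/\Gamma(\tfrac z2+1)=2/\Gamma(\tfrac z2)$ collapses everything to $L(z)=\sqrt\pi\bigl[\Gamma(\tfrac z2+1)/\Gamma(\tfrac{z+1}{2})-\Gamma(\tfrac{z+1}{2})/\Gamma(\tfrac z2)\bigr]$, so that $g(z)=\sqrt{2/\pi}\,L(z)$ is exactly the right-hand side of \eqref{eq:g-explicit}. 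Third, since both sides are meromorphic on $\C$ and coincide on $\Re z>0$, the identity theorem makes the right-hand side the meromorphic continuation of $g$. I would record the sanity checks $L(1)=\tfrac\pi2-1$, $L(2)=2-\tfrac\pi2$, matching the values computed around \eqref{eq:L-rec}.

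Finally, the singularity count. In $\Gamma(\tfrac z2+1)/\Gamma(\tfrac{z+1}{2})$ the numerator contributes simple poles at $z\in\{-2,-4,-6,\dots\}$ while the denominator's poles (at $z\in\{-1,-3,-5,\dots\}$) only produce zeros of the ratio; symmetrically, $\Gamma(\tfrac{z+1}{2})/\Gamma(\tfrac z2)$ has simple poles at $z\in\{-1,-3,-5,\dots\}$ and zeros at $z\in\{0,-2,-4,\dots\}$. The two pole loci lie on disjoint arithmetic progressions, so no cancellation is possible and $g$ acquires a simple pole at each negative integer; at $z=0$ the second bracket vanishes because $1/\Gamma(0)=0$ and the first equals $\Gamma(1)/\Gamma(\tfrac12)=1/\sqrt\pi$, giving $g(0)=\sqrt{2/\pi}$, consistent with $L_0=1$. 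The only thing that demands care — rather than being a real obstacle — is this half-integer $\Gamma$ bookkeeping: checking the disjointness of the pole sets (so that the cancellations are exactly the ones claimed) and confirming that the branch of $t^{z-1}$ used in the Beta reduction is the principal one fixed in the Notation, so that no spurious exponential factor appears.
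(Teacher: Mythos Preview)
Your proof is correct and takes a genuinely different route from the paper's. The paper treats the right-hand side of \eqref{eq:g-explicit} as an \emph{ansatz}: it introduces $A(z)=\Gamma(\tfrac z2+1)/\Gamma(\tfrac{z+1}{2})$, establishes the multiplicative relation $A(z)A(z-1)=z/2$, uses this to verify that $\sqrt{2}\bigl(A(z)-A(z-1)\bigr)$ satisfies the AFE \eqref{eq:g-AFE}, checks agreement at $z=0$ and $z=1$, and then invokes the uniqueness machinery (Carlson/N\"orlund, via the growth bound of Lemma~\ref{lem:growth-g}) to conclude. You instead evaluate $L(z)$ directly for $\Re z>0$ by the identity $\sqrt{(1-t)/(1+t)}=(1-t)/\sqrt{1-t^2}$ and the Beta reduction $\int_0^1 t^{s-1}(1-t^2)^{-1/2}\,\dd t=\tfrac12 B(\tfrac s2,\tfrac12)$, obtaining the $\Gamma$-ratios without any appeal to the functional equation or to uniqueness; the meromorphic extension is then the identity theorem, and the pole count is pure bookkeeping. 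Your argument is shorter and more constructive---it \emph{derives} the formula rather than confirming a guess---while the paper's argument is the one that showcases the AFE-plus-growth-plus-normalization paradigm that the paper is built around, and would remain available even in cases where the defining integral resists closed-form evaluation. Your parenthetical remark that the verification route is also available is accurate and is in fact exactly what the paper does.
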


\begin{proof}
We prove this identity by showing that the proposed expression satisfies the functional equation \eqref{eq:g-AFE} and the initial conditions. Since both the integral definition and the proposed formula satisfy the same order-2 AFE with exponential type $<\pi$, agreement at $z=0$ and $z=1$ (verified below) propagates to all of $\Z$ via the recurrence. By Carlson's theorem, the difference of two functions with exponential type $<\pi$ that vanishes on $\N$ must vanish identically on $\C$.

We first verify that the proposed formula \eqref{eq:g-explicit} has exponential type $<\pi$ in vertical strips, as required for uniqueness. The uniform version of Stirling's formula shows that for Gamma ratios of the form $\Gamma(z/2+\alpha)/\Gamma(z/2+\beta)$, the dominant exponential factors $\exp(-\pi|\Im z|/4)$ cancel exactly, leaving polynomial growth (specifically, decay $O(|z|^{-1/2})$) in $|z|$. Since polynomial growth implies exponential type $0 < \pi$, the growth condition required for uniqueness is satisfied. (A detailed derivation is given in Section~\ref{sec:gamma-prototype}.)

Define the auxiliary function:
$$A(z) = \frac{\Gamma\left(\frac{z}{2}+1\right)}{\Gamma\left(\frac{z+1}{2}\right)}.$$

We have $A(z-1) = \frac{\Gamma\left(\frac{z-1}{2}+1\right)}{\Gamma\left(\frac{z-1+1}{2}\right)} = \frac{\Gamma\left(\frac{z+1}{2}\right)}{\Gamma\left(\frac{z}{2}\right)}$.
Thus, the proposed formula is $g(z) = \sqrt{2}(A(z) - A(z-1))$.

We derive relations between shifted values of $A(z)$ using $\Gamma(x+1)=x\Gamma(x)$:
\begin{align*}
A(z)A(z-1) &= \frac{\Gamma\left(\frac{z}{2}+1\right)}{\Gamma\left(\frac{z+1}{2}\right)} \cdot \frac{\Gamma\left(\frac{z+1}{2}\right)}{\Gamma\left(\frac{z}{2}\right)} = \frac{\frac{z}{2}\Gamma\left(\frac{z}{2}\right)}{\Gamma\left(\frac{z}{2}\right)} = \frac{z}{2}.
\end{align*}

This implies $A(z+1)A(z) = \frac{z+1}{2}$, so $A(z+1) = \frac{z+1}{2A(z)}$.
Also, $A(z+2) = \frac{z+2}{2A(z+1)} = \frac{z+2}{2} \frac{2A(z)}{z+1} = \frac{z+2}{z+1}A(z)$.

We now verify the functional equation $g(z+2) = g(z) - \frac{g(z+1)}{z+1}$. Dividing by $\sqrt{2}$, this becomes:
$$A(z+2)-A(z+1) = A(z)-A(z-1) - \frac{A(z+1)-A(z)}{z+1}.$$

Rearranging, we need to verify:
$$A(z+2)+A(z-1) = A(z+1)\frac{z}{z+1} + A(z)\frac{z+2}{z+1}.$$

We compute both sides using the relations derived above. Left-hand side:
$$A(z+2)+A(z-1) = \frac{z+2}{z+1}A(z) + \frac{z}{2A(z)}.$$

Right-hand side:
$$A(z+1)\frac{z}{z+1} + A(z)\frac{z+2}{z+1} = \frac{z+1}{2A(z)}\frac{z}{z+1} + \frac{z+2}{z+1}A(z) = \frac{z}{2A(z)} + \frac{z+2}{z+1}A(z).$$

Since the left-hand side equals the right-hand side, the functional equation is satisfied. Finally, we verify the initial values match those derived from the integral $L_m$. We have $L_1 = \pi/2-1$, hence 
$$g(1) = \sqrt{2/\pi}(\pi/2-1) = \frac{\pi-2}{\sqrt{2\pi}}.$$

We check the formula \eqref{eq:g-explicit} at $z=1$. 
$$A(1) = \frac{\Gamma(3/2)}{\Gamma(1)} = \sqrt{\pi}/2$$ 
and 
$$A(0) = \frac{\Gamma(1)}{\Gamma(1/2)} = 1/\sqrt{\pi}.$$

Then 
$$ \sqrt{2}(A(1)-A(0)) = \sqrt{2}(\sqrt{\pi}/2 - 1/\sqrt{\pi}) = \sqrt{2}\frac{\pi-2}{2\sqrt{\pi}} = \frac{\pi-2}{\sqrt{2\pi}}=g(1).$$
This matches.  We also verify a second initial value to ensure complete uniqueness. For $z=0$: $L_0 = 1$, hence 
$$g(0) = \sqrt{2/\pi} \cdot 1 = \sqrt{2/\pi}.$$

Using the explicit formula \eqref{eq:g-explicit}, we have
$$g(0) = \sqrt{2}(A(0)-A(-1)).$$
We compute
$$A(0) = \frac{\Gamma(1)}{\Gamma(1/2)} = \frac{1}{\sqrt{\pi}}$$
and
$$A(-1) = \lim_{z \to -1} A(z) = \lim_{z \to -1} \frac{\Gamma(z/2+1)}{\Gamma((z+1)/2)} = 0.$$
Thus
$$g(0) = \sqrt{2}(1/\sqrt{\pi} - 0) = \sqrt{2/\pi},$$
confirming the expected value. Since the expression \eqref{eq:g-explicit} satisfies the same linear functional equation as the connection constant $g(z)$, matches the initial conditions, and exhibits the required growth properties, the identity is proven by uniqueness.
\end{proof}

This formula expresses $g(z)$ as a combination of ratios of ordinary Gamma functions, confirming its regular nature and completing the proof of Theorem \ref{thm:pi-world}.

\section{The AGF prototypes: $\Gamma$, $f$, and $g$}\label{sec:gamma-prototype}

Having established the explicit formulas for $f(z)$ and $g(z)$ in Sections~\ref{sec:e-world} and~\ref{sec:pi-world}, we now reveal the unifying structure: these functions, together with Euler's Gamma function, are instances of a single class characterized by three properties. We verify these properties systematically, completing the proofs of the growth bounds invoked earlier.

\subsection{The three defining properties}

Recall from Definition~\ref{def:AGF-intro} that an additive Gamma function satisfies:
\begin{enumerate}[label=(\roman*)]
\item An additive functional equation with rational coefficients;
\item Moderate growth in vertical strips (polynomial, hence exponential type $<\pi$);
\item Uniqueness via normalization on an interval of length equal to the order.
\end{enumerate}

We now verify these properties for $\Gamma(z)$ (order 1), $f(z)$ (order 2), and $g(z)$ (order 2).

\subsection{Euler's Gamma function: the order-1 prototype}

Euler's product formula (1729), refined by Gauss, can be written:
$$\Gamma(z) = \lim_{n\to\infty} \frac{n! n^z}{z(z+1)\cdots(z+n)}.$$

This classical formula is typically presented as a limit defining $\Gamma(z)$. We now show its deeper structure: \textit{it is the connection constant of a first-order P-recursive sequence}.

Consider the sequence
\begin{equation}\label{eq:gamma-recurrence}
w_{n+1}(z) = \frac{n+1}{n+z} w_n(z), \quad w_1(z) = 1.
\end{equation}

The solution is $w_n(z) = \frac{n!}{(z)_n}$ where $(z)_n = z(z+1)\cdots(z+n-1)$ is the Pochhammer symbol. Using Stirling's approximation, $w_n(z) \sim \Gamma(z) n^{1-z}$, identifying $\Gamma(z)$ as the connection constant.

\textbf{Verification of AGF properties for $\Gamma(z)$:}
\begin{enumerate}[label=(\roman*)]
\item \textbf{AFE:} $\Gamma(z+1) = z\Gamma(z)$, with rational coefficient $R_0(z) = z$, $R_1(z) = -1$.
\item \textbf{Growth:} By Stirling's formula, $|\Gamma(z)| \sim \sqrt{2\pi}|z|^{\Re z - 1/2}e^{-\pi|\Im z|/2}$ in vertical strips. The exponential factor has exponent $\pi/2 < \pi$.
\item \textbf{Uniqueness:} $\Gamma(z)$ is determined by the AFE together with $\Gamma(1) = 1$.
\end{enumerate}

The holonomic triangle for $\Gamma(z)$ is illustrated in Figure~\ref{fig:gamma-triangle}.

\subsection{Growth bounds for $f(z)$: proof of Lemma~\ref{lem:growth-f}}

We use the explicit formula $f(z) = \ee^{-1-i\pi z}\gamma(z+2,-1)$ from \eqref{eq:f-explicit}. For the incomplete Gamma function $\gamma(a,x)$ with fixed $x$ and large $|a|$, according to \cite[Eq.~8.11.6]{DLMF}:
\[
\gamma(a,x) = \frac{x^a \ee^{-x}}{a} \left(1 + O\left(\frac{1}{a}\right)\right)
\]
uniformly in the sector $|\arg a| \le \pi-\delta$.

Setting $a=z+2$ and $x=-1$, using the principal branch $(-1)^{z+2} = \ee^{i\pi z}$ and $\ee^{-(-1)} = \ee$:
\[
\gamma(z+2,-1) = \frac{\ee^{i\pi z+1}}{z+2} \left(1 + O\left(\frac{1}{z}\right)\right).
\]
Substituting:
\[
f(z) = \ee^{-1-i\pi z} \cdot \frac{\ee^{i\pi z+1}}{z+2} \left(1 + O\left(\frac{1}{z}\right)\right) = \frac{1}{z+2} + O\left(\frac{1}{z^2}\right).
\]
Therefore $f(z) = O(1/|z|)$ exhibits polynomial decay in vertical strips, satisfying the growth bound with exponential type $A=0 < \pi$. \qed

\textbf{Verification of AGF properties for $f(z)$:}
\begin{enumerate}[label=(\roman*)]
\item \textbf{AFE:} $f(z+2) = (z+2)f(z) - (z+2)f(z+1)$, with rational coefficients.
\item \textbf{Growth:} Polynomial decay $O(1/|z|)$ as shown above.
\item \textbf{Uniqueness:} Determined by the AFE together with $f(0) = 1/\ee$ and $f(1)$.
\end{enumerate}

\subsection{Growth bounds for $g(z)$: proof of Lemma~\ref{lem:growth-g}}

We use the explicit formula $g(z) = \sqrt{2}(A(z) - A(z-1))$ from \eqref{eq:g-explicit}, where $A(z) = \Gamma(\frac{z}{2}+1)/\Gamma(\frac{z+1}{2})$. For Gamma ratios with large argument, according to \cite[Eq.~5.11.12]{DLMF}:
\[
\frac{\Gamma(w+a)}{\Gamma(w+b)} = w^{a-b} \left[ 1 + \frac{(a-b)(a+b-1)}{2w} + O\left(\frac{1}{w^2}\right) \right].
\]

Setting $w=z/2$:
\[
A(z) = \frac{\Gamma(w+1)}{\Gamma(w+1/2)} = w^{1/2} \left[ 1 + \frac{1}{8w} + O\left(\frac{1}{w^2}\right) \right],
\]
\[
A(z-1) = \frac{\Gamma(w+1/2)}{\Gamma(w)} = w^{1/2} \left[ 1 - \frac{1}{8w} + O\left(\frac{1}{w^2}\right) \right].
\]
Therefore:
\[
g(z) = \sqrt{2}(A(z)-A(z-1)) = \sqrt{2} w^{1/2} \cdot \frac{1}{4w} + O(w^{-3/2}) = O(|z|^{-1/2}).
\]
Thus $g(z)$ exhibits polynomial decay in vertical strips, satisfying the growth bound with exponential type $A=0 < \pi$. \qed

\textbf{Verification of AGF properties for $g(z)$:}
\begin{enumerate}[label=(\roman*)]
\item \textbf{AFE:} $g(z+2) = g(z) - \frac{g(z+1)}{z+1}$, with rational coefficients.
\item \textbf{Growth:} Polynomial decay $O(|z|^{-1/2})$ as shown above.
\item \textbf{Uniqueness:} Determined by the AFE together with $g(0) = \sqrt{2/\pi}$ and $g(1)$.
\end{enumerate}

\subsection{Summary: the AGF hierarchy}

We have verified that $\Gamma(z)$, $f(z)$, and $g(z)$ satisfy all three defining properties of additive Gamma functions:

\begin{center}
\begin{tabular}{lccc}
\hline
Function & Order & AFE & Growth \\
\hline
$\Gamma(z)$ & 1 & $\Gamma(z+1) = z\Gamma(z)$ & $O(|z|^{\Re z - 1/2}e^{-\pi|\Im z|/2})$ \\
$f(z)$ & 2 & $f(z+2) = (z+2)[f(z) - f(z+1)]$ & $O(|z|^{-1})$ \\
$g(z)$ & 2 & $g(z+2) = g(z) - \frac{g(z+1)}{z+1}$ & $O(|z|^{-1/2})$ \\
\hline
\end{tabular}
\end{center}

This confirms that the AGF framework captures a genuine class of special functions, with Euler's $\Gamma(z)$ as the founding example and $f(z)$, $g(z)$ as order-2 instances exhibiting the irregular/regular dichotomy.

\section{The structure of additive Gamma functions}\label{sec:general-theory}

\subsection{From examples to general theory}

The functions $\Gamma(z)$, $f(z)$, and $g(z)$ share three properties: connection constant origin, additive functional equations with rational coefficients, and controlled growth. This motivates a general definition.

We call these functions \textbf{additive Gamma functions} because, like Euler's Gamma function (the order-1 prototype), they are uniquely determined by a linear functional equation together with a growth condition, and emerge from asymptotic analysis of holonomic sequences.

The key mechanism ensuring rational coefficients is the integer slope condition. Consider a Gamma factor $\Gamma(n+\alpha z+\beta)$ in $u_n(z) \sim \Lambda(n,z)h(z)$. Under $z\mapsto z+1$:
$$\frac{\Gamma(n+\alpha(z+1)+\beta)}{\Gamma(n+\alpha z+\beta)} = \frac{\Gamma(n+\alpha z + \alpha + \beta)}{\Gamma(n+\alpha z+\beta)}.$$

When $\alpha \in \Z$ with $\alpha > 0$, this ratio equals $(n+\alpha z+\beta)_\alpha$, polynomial in $z$. When $\alpha \in \Z$ with $\alpha < 0$, it equals $\frac{1}{(n+\alpha z+\beta-\alpha)_{|\alpha|}}$, rational in $z$. However, if $\alpha\notin\Z$, the poles form two disjoint infinite arithmetic progressions, preventing rationality.

\subsection{The Integer Slope Condition}

We now state and prove the central technical tool:

\begin{lemma}[Integer Slope Condition]\label{lem:integer-slope}
Let $\phi(z) = \alpha z + \beta$ be a linear function with $\alpha, \beta \in \C$ and $\alpha \neq 0$. The ratio
$$R(z) := \frac{\Gamma(\phi(z+1))}{\Gamma(\phi(z))} = \frac{\Gamma(\alpha z + \alpha + \beta)}{\Gamma(\alpha z + \beta)}$$
is a rational function of $z$ if and only if $\alpha \in \Z$.
\end{lemma}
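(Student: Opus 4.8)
The plan is to prove the two implications separately; the forward one is an elementary telescoping, while the converse follows from a count of poles.

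For the direction $\alpha \in \Z \Rightarrow R$ rational, I would set $w = \alpha z + \beta$ so that $R(z) = \Gamma(w+\alpha)/\Gamma(w)$, and iterate the fundamental relation $\Gamma(\zeta+1) = \zeta\Gamma(\zeta)$. When $\alpha > 0$ this telescopes to the rising factorial $\Gamma(w+\alpha)/\Gamma(w) = \prod_{j=0}^{\alpha-1}(\alpha z + \beta + j)$, a polynomial in $z$; when $\alpha < 0$, writing $m = -\alpha > 0$ and applying the relation $m$ times to $\Gamma(w)$ gives $\Gamma(w) = \prod_{j=1}^{m}(w-j)\,\Gamma(w-m)$, so $R(z) = 1/\prod_{j=1}^{m}(\alpha z + \beta - j)$ is rational. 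The case $\alpha = 0$ is excluded by hypothesis, so this settles one direction completely.

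For the converse I would argue by contraposition: assuming $\alpha \notin \Z$, I want to exhibit infinitely many poles of $R$, contradicting rationality. The input facts are that $\Gamma$ is zero-free on $\C$ and has only simple poles, at the nonpositive integers; hence $1/\Gamma(\alpha z + \beta)$ is entire with simple zeros on the arithmetic progression $\mathcal{P}_{\mathrm{den}} = \{\,z : \alpha z + \beta \in \{0,-1,-2,\dots\}\,\}$, and $\Gamma(\alpha z + \alpha + \beta)$ is meromorphic with simple poles on the progression $\mathcal{P}_{\mathrm{num}} = \{\,z : \alpha z + \alpha + \beta \in \{0,-1,-2,\dots\}\,\}$. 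Both progressions have common difference $-1/\alpha$, and $\mathcal{P}_{\mathrm{num}}$ is the translate of $\mathcal{P}_{\mathrm{den}}$ by $-1$. Viewing $R(z) = \Gamma(\alpha z + \alpha + \beta)\cdot\frac{1}{\Gamma(\alpha z + \beta)}$ as a product of a meromorphic and an entire function, each point of $\mathcal{P}_{\mathrm{num}}$ is a genuine simple pole of $R$ unless it also lies in $\mathcal{P}_{\mathrm{den}}$, where the simple zero of the second factor cancels the simple pole of the first. A point of $\mathcal{P}_{\mathrm{num}}$ lies in $\mathcal{P}_{\mathrm{den}}$ precisely when there exist $j, k \in \N$ with $\alpha z + \alpha + \beta = -k$ and $\alpha z + \beta = -j$; subtracting forces $\alpha = j - k \in \Z$, contrary to assumption. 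Hence $\mathcal{P}_{\mathrm{num}} \cap \mathcal{P}_{\mathrm{den}} = \emptyset$, every point of the infinite set $\mathcal{P}_{\mathrm{num}}$ is a pole of $R$, and a rational function cannot have infinitely many poles.

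I expect the main obstacle to be the bookkeeping in the converse: one must verify carefully that \emph{no} numerator pole is accidentally removed — exactly the content of the index relation $\alpha = j - k$ — and that the simple-pole/simple-zero cancellation is complete (both are simple, so nothing survives). I would also note that no growth estimate on $R$ is required anywhere: for the converse the implication ``rational $\Rightarrow$ finitely many poles'' already gives the contradiction, and for the forward direction the telescoping exhibits the rational function explicitly.
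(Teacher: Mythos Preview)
Your proof is correct and follows essentially the same approach as the paper: telescoping via $\Gamma(\zeta+1)=\zeta\Gamma(\zeta)$ for the forward direction, and for the converse, showing that when $\alpha\notin\Z$ the two arithmetic progressions of $\Gamma$-poles are disjoint (via the index relation $\alpha=j-k$), forcing infinitely many special points of $R$ and contradicting rationality. The one cosmetic difference is that the paper tracks the \emph{zeros} of $R$ arising from the denominator's poles, while you track the \emph{poles} of $R$ arising from the numerator's poles; both work, and your version is marginally cleaner since it avoids the extra clause ``$R\not\equiv 0$'' needed when arguing via zeros.
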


\begin{proof}
We treat sufficiency and necessity separately.

\textit{Sufficiency ($\Leftarrow$).} Assume $\alpha \in \Z$. We distinguish two cases.

If $\alpha = k > 0$, using the functional equation $\Gamma(w+1) = w\Gamma(w)$ iteratively $k$ times yields $\Gamma(w+k) = (w)_k \Gamma(w)$, where $(w)_k = w(w+1)\cdots(w+k-1)$ is the Pochhammer symbol. Setting $w = \alpha z + \beta$, we obtain
$$R(z) = \frac{\Gamma(w+k)}{\Gamma(w)} = (w)_k = \prod_{j=0}^{k-1}(\alpha z + \beta + j),$$
a polynomial in $z$ of degree $k$.

If $\alpha = -k < 0$ with $k > 0$, let $w = \alpha z + \beta$. The numerator is $\Gamma(w-k)$. Writing $\Gamma(w) = \Gamma((w-k)+k)$ and applying the product formula, we get $\Gamma(w) = (w-k)_k \Gamma(w-k)$. Thus
$$R(z) = \frac{\Gamma(w-k)}{\Gamma(w)} = \frac{1}{(w-k)_k} = \frac{1}{\prod_{j=0}^{k-1}(\alpha z + \beta - k + j)},$$
the reciprocal of a polynomial, hence rational.

\textit{Necessity ($\Rightarrow$).} Assume $R(z) \in \Q(z)$. We prove $\alpha \in \Z$ by contradiction.

A rational function has finitely many poles and zeros. The function $\Gamma$ has no zeros and simple poles at $\{0, -1, -2, \ldots\}$.

The poles of the denominator $\Gamma(\alpha z + \beta)$ occur at $\alpha z + \beta = -k$ for $k \in \N$, giving
$$P_{\mathrm{denom}} = \left\{ z_k = \frac{-\beta - k}{\alpha} : k \in \N \right\}.$$

The poles of the numerator $\Gamma(\alpha z + \alpha + \beta)$ occur at
$$P_{\mathrm{numer}} = \left\{ z'_k = \frac{-\beta - \alpha - k}{\alpha} = z_k - 1 : k \in \N \right\}.$$

Since $\Gamma$ has no zeros, the numerator is finite and nonzero at every point of $P_{\mathrm{denom}}$ that does not belong to $P_{\mathrm{numer}}$. At such points, $R(z)$ has a zero (from the pole of the denominator).

If $\alpha \notin \Z$, the sets $P_{\mathrm{denom}}$ and $P_{\mathrm{numer}}$ are disjoint: equality $z_j = z'_k$ would imply $\alpha = k - j \in \Z$, contradicting our assumption. Consequently, $R(z)$ vanishes at every point of the infinite set $P_{\mathrm{denom}}$.

The only rational function with infinitely many zeros is identically zero. But $R(z) \not\equiv 0$ since $\Gamma$ is nowhere zero. This contradiction establishes $\alpha \in \Z$.
\end{proof}

\subsection{Characterization of additive Gamma functions}

As motivated in the introduction (Definition \ref{def:AGF-intro}), the properties common to $\Gamma(z)$, $f(z)$, and $g(z)$ lead to the formal definition of an additive Gamma function: a meromorphic function satisfying a homogeneous additive functional equation with rational coefficients and exhibiting polynomial growth in vertical strips. This definition captures the essential structure shared by all connection constants arising from holonomic sequences with integer-slope asymptotics.

\begin{theorem}[Characterization of AGFs]\label{thm:AGF-equivalence}
A meromorphic function $h$ is an AGF of order $r$ if and only if $h$ arises as a connection constant from a sequence $u_n(z)$ that is holonomic in $(n,z)$ with integer-slope asymptotics:
$$u_n(z) \sim \Lambda(n,z)h(z) \quad \text{as } n \to \infty,$$
where
$$\Lambda(n,z) = \lambda^n n^{\rho(z)} \prod_j \Gamma(n+\alpha_j z+\beta_j)^{m_j}$$
with constant $\lambda\in\C^*$, affine $\rho(z)$ with integer slope, integer slopes $\alpha_j\in\Z$, and $m_j\in\Z$.
\end{theorem}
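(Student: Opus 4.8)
The plan is to prove the equivalence of Theorem~\ref{thm:AGF-equivalence} by treating the two implications separately. The substantive direction is $(\Leftarrow)$: extracting an effective order-$r$ additive functional equation with rational coefficients from a sequence holonomic in $(n,z)$ with integer-slope asymptotics. The converse $(\Rightarrow)$ is a construction: realizing a given AGF as a connection constant by ``twisting'' the order-$1$ prototype of Section~\ref{sec:gamma-prototype}.

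\emph{The forward implication.} Suppose $u_n(z)$ is holonomic in $(n,z)$ and $u_n(z)\sim\Lambda(n,z)h(z)$. By elimination in the Ore algebra, holonomicity supplies a pure shift recurrence in $z$,
\[
\sum_{k=0}^{r}Q_k(n,z)\,u_n(z+k)=0,\qquad Q_k\in\C[n,z],
\]
which we take of minimal order $r$ and (after normalizing) effective in the endpoints. Set $\tilde u_n(z):=u_n(z)/\Lambda(n,z)$, so that $\tilde u_n(z)\to h(z)$ locally uniformly off the poles, and rewrite the recurrence as $\sum_k Q_k(n,z)\,[\Lambda(n,z+k)/\Lambda(n,z)]\,\tilde u_n(z+k)=0$. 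The decisive point is that each ratio $\Lambda(n,z+k)/\Lambda(n,z)$ is a \emph{rational} function of $(n,z)$: the factor $\lambda^n$ cancels; $n^{\rho(z+k)}/n^{\rho(z)}=n^{\rho(z+k)-\rho(z)}$ is an integer power of $n$ because $\rho$ is affine with integer slope; and each $\Gamma(n+\alpha_j(z+k)+\beta_j)/\Gamma(n+\alpha_j z+\beta_j)$ is a product (or reciprocal product) of Pochhammer symbols by Lemma~\ref{lem:integer-slope}, using $\alpha_j\in\Z$. Clearing the $n$-denominators gives coefficients $\widehat Q_k(n,z)$ polynomial in $n$ with coefficients in $\Q(z)$; dividing by $n^{\delta}$ with $\delta=\max_k\deg_n\widehat Q_k$ and letting $n\to\infty$, the leading-in-$n$ coefficients $A_k(z)\in\Q(z)$ satisfy $\sum_{k=0}^{r}A_k(z)h(z+k)=0$, which is property~(i). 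Property~(ii), polynomial growth in vertical strips, is inherited by $h=\lim_n\tilde u_n$ from the uniform (in $n$, per strip) polynomial control of $u_n/\Lambda$ that is part of the integer-slope asymptotics hypothesis. Property~(iii) is then automatic: (i) and (ii) force exponential type $<\pi$ in every strip, so Lemma~\ref{lem:unicity-norlund} gives uniqueness from the values on $[z_0,z_0+r)$.

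\emph{The converse implication.} Given an AGF $h$ of order $r$ with AFE $\sum_{k=0}^{r}R_k(z)h(z+k)=0$, $R_k\in\Q(z)$, I would discretize by twisting the Gamma prototype $w_n(z)=n!/(z)_n=\Gamma(n+1)/\Gamma(n+z)$ of \eqref{eq:gamma-recurrence}: put
\[
u_n(z):=\frac{\Gamma(n+1)}{\Gamma(n+z)}\,h(z),\qquad \Lambda(n,z):=\frac{\Gamma(n+1)}{\Gamma(n+z)}\ \sim\ n^{1-z}.
\]
Then $u_n(z)$ satisfies the first-order recurrence $(n+z)u_{n+1}(z)=(n+1)u_n(z)$, polynomial in $(n,z)$; and since $(n+z)_k\,u_n(z+k)=u_n(z)\,h(z+k)/h(z)$, it also satisfies $\sum_{k=0}^{r}R_k(z)(n+z)_k\,u_n(z+k)=0$, whose coefficients become polynomial in $(n,z)$ after clearing the $z$-denominators of the $R_k$, the $k=0$ and $k=r$ coefficients remaining $\not\equiv 0$. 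Thus $u_n(z)$ is holonomic in $(n,z)$ with an effective order-$r$ shift recurrence in $z$. Finally $u_n(z)/\Lambda(n,z)=h(z)\to h(z)$, the slopes $0,1$ of $\Lambda$ are integers (so this is an integer-slope asymptotic with $\lambda=1$, $\rho(z)=1-z$), and the required growth of the limit is exactly hypothesis~(ii) on $h$. Hence $h$ is a connection constant of the stated type; one may equally use the degenerate realization $u_n\equiv h$, $\Lambda\equiv 1$.

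\emph{Main obstacle.} The delicate point is keeping the order exactly $r$ under the limit in the forward direction: cancellation among the leading-in-$n$ coefficients when passing from $\widehat Q_k$ to $A_k$ could a priori lower the order, so one must show $A_0\not\equiv 0$ and $A_r\not\equiv 0$. I expect to handle this by tracking that $A_0$ and $A_r$ arise from the boundary coefficients $Q_0$, $Q_r$ times the cleared $n$-denominator, identifying their $n$-degrees with $\delta$ by minimality of the chosen $z$-recurrence, and, if a residual drop occurs, re-reading the resulting lower-order equation as the genuine AFE — i.e.\ defining $r$ on both sides of the equivalence as the order of the AFE actually produced and verifying it coincides with the minimal $z$-shift order of $u_n(z)$. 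A secondary technical point is pinning down the local uniformity in $z$ of $u_n(z)\sim\Lambda(n,z)h(z)$, needed both to pass to the limit coefficientwise and to transfer the growth bound; I would fold this into a precise statement of what ``integer-slope asymptotics'' entails.
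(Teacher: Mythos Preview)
Your proposal is correct and follows essentially the same route as the paper: the forward direction uses Ore-algebra elimination to obtain a $z$-shift contiguity relation, invokes Lemma~\ref{lem:integer-slope} to show the ratios $\Lambda(n,z+k)/\Lambda(n,z)$ are rational in $(n,z)$, and extracts the AFE in the $n\to\infty$ limit (this is exactly Proposition~\ref{prop:prec-to-afe}); the converse multiplies $h(z)$ by the holonomic surrogate $\Gamma(n+1)/\Gamma(n+z)$ for $n^{1-z}$, matching the paper's use of $w_n(z)=n!/\Gamma(n+1-z)$. Your flagged obstacle---that the leading-in-$n$ limit might drop the order below $r$---is real and is treated just as loosely in the paper (Proposition~\ref{prop:prec-to-afe} only asserts a ``nondegenerate'' AFE without pinning down the order), so you are not missing anything the paper supplies.
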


\begin{proof}
\textit{Direction ($\Leftarrow$).} Suppose $h(z)$ is the connection constant of a holonomic sequence $u_n(z)$ with integer-slope asymptotics $u_n(z) \sim \Lambda(n,z)h(z)$. By Proposition~\ref{prop:prec-to-afe}, the holonomicity guarantees the existence of a contiguity relation in $z$, from which $h(z)$ inherits an additive functional equation with rational coefficients. The moderate growth condition follows from the structure of $\Lambda(n,z)$: Gamma factors with integer slopes have ratios (under parameter shifts in $z$) that exhibit polynomial growth via Stirling estimates, as demonstrated for $f(z)$ and $g(z)$ in Section~\ref{sec:gamma-prototype}. Hence $h$ is an AGF.

\textit{Direction ($\Rightarrow$).} Suppose $h$ is an AGF of order $r$. We construct a sequence $u_n(z)$ holonomic in $(n,z)$ having $h(z)$ as its connection constant.

A direct definition $u_n(z) := n^z h(z)$ is not valid because $n^z$ is not P-recursive in $n$ over $\Q(n,z)$: the ratio $(n+1)^z/n^z = (1+1/n)^z$ is not rational in $n$.

Instead, we construct a holonomic sequence asymptotically equivalent to $n^z$. The sequence
$$w_n(z) := \frac{n!}{\Gamma(n+1-z)}$$
satisfies recurrences in both variables:
$$w_{n+1}(z) = \frac{n+1}{n+1-z} w_n(z), \qquad w_n(z+1) = (n-z) w_n(z),$$
with coefficients in $\Q(n,z)$, so $w_n(z)$ is holonomic in $(n,z)$. By Stirling's formula, $w_n(z) \sim n^z$ as $n \to \infty$.

More generally, the class of holonomic sequences is closed under multiplication \cite{Stanley1999}. The components of a general shell $\Lambda(n,z)$ are:
\begin{enumerate}
\item The geometric term $\lambda^n$ satisfies $a_{n+1} = \lambda \cdot a_n$ (holonomic, trivial in $z$).
\item The power term $n^{\rho(z)}$ with $\rho(z) = c_1 z + c_0$ is asymptotically equivalent to $w_n(z)^{c_1} \cdot n^{c_0}$, a product of holonomic sequences.
\item Each Gamma factor $\Gamma(n + \alpha_j z + \beta_j)^{m_j}$ with $\alpha_j \in \Z$ is holonomic in $(n,z)$ via the functional equations of $\Gamma$.
\end{enumerate}

Let $\widetilde{\Lambda}(n,z)$ be the product of these holonomic components. Then $\widetilde{\Lambda}(n,z)$ is holonomic in $(n,z)$ and $\widetilde{\Lambda}(n,z) \sim \Lambda(n,z)$ as $n \to \infty$.

Define $u_n(z) := \widetilde{\Lambda}(n,z) \cdot h(z)$. Since $h(z)$ is independent of $n$, the sequence $u_n(z)$ inherits holonomicity from $\widetilde{\Lambda}(n,z)$ and has the required asymptotics $u_n(z) \sim h(z) \Lambda(n,z)$.
\end{proof}

\begin{proposition}[From Holonomic Sequence to AFE]\label{prop:prec-to-afe}
Let $u_n(z)$ be holonomic in $(n,z)$, i.e., satisfy compatible linear recurrences in both $n$ and $z$ with coefficients in $\Q(n,z)$. Assume integer-slope asymptotics as in Theorem~\ref{thm:AGF-equivalence}:
\begin{equation*}
u_n(z) \sim \Lambda(n,z)h(z) \quad \text{where} \quad 
\Lambda(n,z) = \lambda^n n^{\rho(z)} \prod_j \Gamma(n+\alpha_j z+\beta_j)^{m_j}
\end{equation*}
with $\rho(z)$ affine of integer slope and $\alpha_j \in \Z$.

Then the connection constant $h(z)$ satisfies an AFE with rational 
coefficients.
\end{proposition}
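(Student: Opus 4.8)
The plan is to take the limit $n\to\infty$ of the recurrence in the variable $z$ satisfied by $u_n(z)$. By holonomicity there is a relation
$$\sum_{k=0}^{s} c_k(n,z)\,u_n(z+k)=0,\qquad c_k\in\Q(n,z)\ \text{not all zero},$$
valid for all $n$ and all $z$ outside a discrete set. Set $v_n(z):=u_n(z)/\Lambda(n,z)$, so that $v_n(z+k)\to h(z+k)$ as $n\to\infty$ for every fixed $z$ none of whose translates $z,z+1,\dots,z+s$ is a pole of $h$ (the hypothesis $u_n\sim\Lambda h$). Dividing the recurrence by $\Lambda(n,z)$ rewrites it as
$$\sum_{k=0}^{s} c_k(n,z)\,\Psi_k(n,z)\,v_n(z+k)=0,\qquad \Psi_k(n,z):=\frac{\Lambda(n,z+k)}{\Lambda(n,z)}.$$

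The first step is to show that each $\Psi_k$ is a rational function of $n$ (and of $z$). The geometric part contributes $\lambda^{n}/\lambda^{n}=1$; the power part contributes $n^{\rho(z+k)-\rho(z)}=n^{c_1k}$, an integer power of $n$ because $\rho$ has integer slope $c_1$; and each Gamma factor contributes
$$\left(\frac{\Gamma\!\bigl(n+\alpha_j z+\alpha_j k+\beta_j\bigr)}{\Gamma\!\bigl(n+\alpha_j z+\beta_j\bigr)}\right)^{m_j},$$
which, since $\alpha_j k\in\Z$, is by the Integer Slope Condition (Lemma~\ref{lem:integer-slope}) a Pochhammer polynomial in $(n,z)$ or the reciprocal of one, raised to the integer power $m_j$. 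Hence $c_k\Psi_k$ is a rational function of $n$ with coefficients in $\Q(z)$ (over the field generated by the $\beta_j$, in general).

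The second step is the limit. For generic fixed $z$, the rational function $n\mapsto c_k(n,z)\Psi_k(n,z)$ has a definite order of growth $c_k(n,z)\Psi_k(n,z)\sim\gamma_k(z)\,n^{e_k}$ with $e_k\in\Z$ (the difference of numerator and denominator degrees in $n$, constant off a discrete set) and leading coefficient $\gamma_k\in\Q(z)$. Let $e:=\max\{e_k:c_k\Psi_k\not\equiv 0\}$. Dividing the recurrence by $n^{e}$ and letting $n\to\infty$ — every term with $e_k<e$ drops out, the others survive with coefficient $\gamma_k(z)$, and each $v_n(z+k)$ converges to the finite value $h(z+k)$ — yields
$$\sum_{k:\,e_k=e}\gamma_k(z)\,h(z+k)=0.$$
The coefficients are not all identically zero since the maximum $e$ is attained, so clearing denominators produces $\sum_k R_k(z)h(z+k)=0$ with $R_k\in\Q[z]$ not all zero; truncating to the lowest and highest nonzero indices and, if needed, translating $z$ gives a homogeneous AFE with $R_0\not\equiv 0$ and top coefficient $\not\equiv 0$. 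Since $h$ and its translates are meromorphic, this identity, first established off a discrete set, extends to all of $\C$.

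The main obstacle is controlling the exponents $e_k(z)$ and guaranteeing nondegeneracy: one must check that the leading $n$-coefficient of $c_k\Psi_k$ does not vanish on a set large enough to corrupt the comparison of growth orders, and that the surviving relation is not the tautology $0=0$. Both follow because $c_k\Psi_k$ lies in $\Q(z)(n)$ and therefore has a well-defined $n$-degree over $\Q(z)$, with leading coefficient a nonzero element of $\Q(z)$ that vanishes only on a discrete set; away from that set the growth comparison is uniform and at least one $\gamma_k$ is a nonzero rational function, so the limiting AFE is nontrivial. A minor bookkeeping point is the coefficient field: if $\lambda$ or the $\beta_j$ are not rational, the same argument yields an AFE over the finitely generated field they generate, which is all the characterization theorem requires.
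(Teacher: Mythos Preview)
Your proof is correct and follows essentially the same route as the paper's: use the $z$-contiguity relation from holonomicity, divide by $\Lambda(n,z)$, invoke the Integer Slope Condition to see that the shell ratios $\Lambda(n,z+k)/\Lambda(n,z)$ are rational in $(n,z)$, and extract the dominant power of $n$ to obtain the AFE. Your handling of nondegeneracy (viewing $c_k\Psi_k$ as an element of $\Q(z)(n)$ with well-defined $n$-degree and nonzero leading coefficient) and your caveat about the coefficient field when $\lambda,\beta_j\notin\Q$ are in fact slightly more careful than the paper's own version.
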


\begin{proof}
Since $u_n(z)$ is holonomic in $(n,z)$, elimination theory in Ore algebras \cite{ChyzakSalvy1998} guarantees the existence of a contiguity relation in $z$:
\begin{equation}\label{eq:contiguity}
\sum_{k=0}^{r'} S_k(n,z) u_n(z+k) = 0,
\end{equation}
where $S_k(n,z) \in \Q(n,z)$ are rational functions.

Substituting the asymptotic form $u_n(z+k) \sim \Lambda(n,z+k)h(z+k)$ and dividing by $\Lambda(n,z)$ yields
\begin{equation}\label{eq:asymp-contiguity}
\sum_{k=0}^{r'} h(z+k) \cdot T_k(n,z) \approx 0, \quad \text{where } T_k(n,z) := S_k(n,z) \frac{\Lambda(n,z+k)}{\Lambda(n,z)}.
\end{equation}

We analyze the ratio $\Lambda(n,z+k)/\Lambda(n,z)$. The power part contributes $n^{c_1 k}$ since $\rho(z) = c_1 z + c_0$ is affine with $c_1 \in \Z$; this is a polynomial in $n$. For each Gamma factor, the ratio
$$G_j(n,z,k) = \frac{\Gamma(n+\alpha_j(z+k)+\beta_j)}{\Gamma(n+\alpha_j z+\beta_j)}$$
equals $(n+\alpha_j z+\beta_j)_M$ if $M = \alpha_j k > 0$, or $1/(n+\alpha_j z+\beta_j+M)_{|M|}$ if $M < 0$. By Lemma~\ref{lem:integer-slope}, since $\alpha_j \in \Z$, these are rational in $(n,z)$.

Thus each $T_k(n,z)$ admits an expansion in powers of $n$:
$$T_k(n,z) = n^{\sigma_k} \left( R_k^{(0)}(z) + R_k^{(1)}(z) n^{-1} + O(n^{-2}) \right)$$
where $\sigma_k \in \Z$ and $R_k^{(j)}(z) \in \Q(z)$.

Let $\sigma = \max_k \sigma_k$ be the dominant exponent. Dividing \eqref{eq:asymp-contiguity} by $n^\sigma$ and taking $n \to \infty$, the leading coefficient must vanish:
$$ \sum_{k : \sigma_k = \sigma} h(z+k) \cdot R_k^{(0)}(z) = 0. $$

If this equation is trivial (all $R_k^{(0)} = 0$ for the maximal indices), we proceed to the next power of $n$. Since \eqref{eq:contiguity} holds exactly for all $n$, some coefficient must be nontrivial, yielding a nondegenerate AFE
$$\sum_{k=0}^{r'} R_k(z) h(z+k) = 0, \qquad R_k(z) \in \Q(z). \qedhere$$
\end{proof}

\begin{example}[Derivation of AFE for $f(z)$]\label{ex:afe-derivation}
We illustrate Proposition~\ref{prop:prec-to-afe} with the $\ee$-world example.

The sequence $u_n(z)$ satisfies the P-recurrence (after clearing denominators):
\begin{equation}\label{eq:prec-example}
(n+z)u_{n+2}(z) - (n+z)u_{n+1}(z) - u_n(z) = 0.
\end{equation}

The functional equation for $f(z)$ does not arise directly from this recurrence, which relates terms at fixed $z$. Instead, one derives a contiguity relation linking $u_n(z)$, $u_n(z+1)$, and $u_n(z+2)$ via elimination. Shifting $z \mapsto z+1$ in \eqref{eq:prec-example} yields
$$
(n+z+1)u_{n+2}(z+1) - (n+z+1)u_{n+1}(z+1) - u_n(z+1) = 0,
$$
and similarly for $z \mapsto z+2$. The three equations (at $z$, $z+1$, $z+2$) form a system from which a contiguity relation in $z$ can be extracted.

With the asymptotic ansatz $u_n(z) \sim f(z) \cdot n$, we have $u_n(z+k) \sim f(z+k) \cdot n$. Since the asymptotic shell $\Lambda(n,z) = n$ is independent of $z$, the ratio $\Lambda(n,z+k)/\Lambda(n,z) = 1$ contributes no $z$-dependence.

The leading-order substitution into the contiguity relation yields a degenerate equation (the $O(n)$ terms cancel). To obtain the correct AFE, one uses the refined expansion $u_n(z) = f(z) \cdot n + c(z) + O(1/n)$ and extracts the coefficient of the leading non-vanishing term. Matching at order $O(1)$ recovers the functional equation
$$f(z+2) = (z+2)[f(z) - f(z+1)]$$
established directly in Section~\ref{sec:e-world}.

This example shows that while the general mechanism of Proposition~\ref{prop:prec-to-afe} applies, extracting the AFE may require asymptotic refinement beyond leading order when the shell $\Lambda(n,z)$ has trivial $z$-dependence.
\end{example}

\subsection{Examples}

We have three fundamental examples:
\begin{enumerate}
\item $\Gamma(z)$: order 1, $\Lambda(n,z)=n^{1-z}$, equation $\Gamma(z+1)=z\Gamma(z)$
\item $f(z)$: order 2, $\Lambda(n,z)=n$, equation \eqref{eq:f-AFE}
\item $g(z)$: order 2, $\Lambda(n,z)=\sqrt{n}$, equation \eqref{eq:g-AFE}
\end{enumerate}

The additive paradigm contrasts with multiplicative generalizations (Barnes G-function \cite{Barnes1901}). Our AGFs combine linearly at shifted arguments, generalizing the linear recurrence structure defining $\Gamma(z)$.

\section{The regular-irregular dichotomy}\label{sec:dichotomy}

\subsection{The fundamental distinction}

Formulas \eqref{eq:f-explicit} and \eqref{eq:g-explicit} reveal a structural difference rooted in asymptotic behavior at infinity.

For $g(z)$: $g(z+2) = g(z) - \frac{g(z+1)}{z+1}$. As $|z| \to \infty$, the coefficient $\frac{1}{z+1} \to 0$, making this \textit{regular at infinity} \cite{BirkhoffTrjitzinsky1933}. Regular equations have solutions through ordinary hypergeometric functions, reducing to Gamma ratios. Formula \eqref{eq:g-explicit} confirms this.

For $f(z)$: $f(z+2) = (z+2)f(z) - (z+2)f(z+1)$. Coefficients grow linearly, making this \textit{irregular at infinity}. Irregular equations require confluent hypergeometric functions. Formula \eqref{eq:f-explicit} involves $\gamma(z+2,-1)$, confirming this.

\subsection{Implications}

This dichotomy shows AGFs encompass both regular and irregular cases, richer than Gamma ratios alone. The mirror symmetry \eqref{eq:mirror} establishes correspondence between $\ee$ and $\pi$, and between ordinary and confluent hypergeometric realms.

\section{The holonomic perspective}\label{sec:holonomic}

\subsection{Scope and limitations}

An additive Gamma function of order $r$ is a single mathematical object that manifests in three equivalent forms (Figure~\ref{fig:triangle}): as a P-recursive sequence with rational coefficients and integer slopes, as an additive functional equation for the connection constant, and as a D-finite differential equation for the generating function. This correspondence applies specifically to sequences with factorisable asymptotics having integer slopes, ensuring rational coefficients in the functional equations.

\subsection{The holonomic triangle of order-$r$ equations}

\begin{principle}[The holonomic triangle]\label{prin:holonomic}
This principle provides a conceptual framework positioning AGFs within the broader landscape of holonomic objects. It is not a formal theorem but rather an organizing perspective that guides the theory.

For P-recursive sequences with integer-slope asymptotics and moderate algebraic growth, an additive Gamma function of order $r$ typically admits three related descriptions:

\begin{enumerate}
\item \textbf{P-recurrence:} A sequence $u_n(z)$ satisfying
$$\sum_{k=0}^r p_k(n,z) u_{n+k}(z) = 0$$
with $p_k(n,z) \in \Q(n,z)$ and integer-slope asymptotics $u_n(z) \sim \Lambda(n,z)h(z)$.

\item \textbf{AFE:} The connection constant $h(z)$ satisfying
$$\sum_{k=0}^r R_k(z) h(z+k) = 0$$
with rational coefficients $R_k(z) \in \Q(z)$.

\item \textbf{D-finite ODE:} The ordinary generating function $U(x,z) = \sum_n u_n(z)x^n$ satisfying a linear ODE:
$$\sum_{j=0}^m Q_j(x,z) \frac{\dd^j U}{\dd x^j} = 0,$$
with $Q_j(x,z) \in \Q(x,z)$. Under the integer-slope and moderate growth conditions, the order $m$ depends on both the width $r$ of the P-recurrence and the maximum degree $d$ of its polynomial coefficients (see Remark~\ref{rem:order-ode}). Based on elimination theory in Ore algebras \cite{ChyzakSalvy1998,Koutschan2010}, the order is typically at most $(d+1)r$, though often much lower in practice.
\end{enumerate}

The correspondences are:
\begin{itemize}
\item (1) $\to$ (2): Via Proposition \ref{prop:prec-to-afe}, by deriving the contiguity relation in $z$ for $u_n(z)$ and taking the asymptotic limit $n\to\infty$, utilizing the integer slope condition to ensure rationality.
\item (1) $\to$ (3): Standard generating function techniques \cite{FlajoletSedgewick2009}.
\item (2) $\leftrightarrow$ (3): Via singularity analysis and the transfer theorem.
\item (2) $\to$ (1): The connection constant $h(z)$ of an AGF determines a canonical P-recursive sequence through its generating function, as shown in Theorem~\ref{thm:AGF-equivalence}.
\end{itemize}

We emphasize that AGFs are not D-finite functions in general; rather, they are organized by additive functional equations. In favorable cases (such as our examples $f(z)$ and $g(z)$), they admit hypergeometric or Gamma-ratio representations that connect to the D-finite world.

This framework connects to Zeilberger's holonomic systems approach \cite{Zeilberger1990} and creative telescoping methods \cite{Zeilberger1991}. Modern computational tools based on Ore algebras \cite{ChyzakSalvy1998,Koutschan2010,KauersPaule2011} make these correspondences constructive and algorithmic.
\end{principle}

For P-recursive sequences satisfying the integer slope condition, the same mathematical object of order $r$ appears in three forms, constituting the holonomic triangle (Figure~\ref{fig:triangle}):

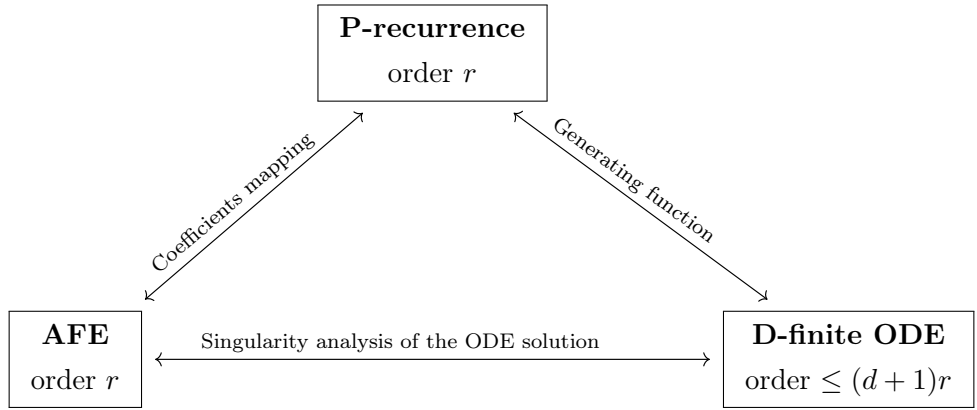
\begin{figure}[ht]
\centering
\begin{tikzcd}[row sep=2.5cm, column sep=2cm]
& \boxed{
\begin{tabular}{c}
\textbf{P-recurrence} \\[4pt]
order $r$
\end{tabular}}
\arrow[dl, <->, "\text{  $\,\,\,\,\,\,\,\,\,\,\,\,\,\,\,\,\,\,\,\,\,\,\,\,\,\,\,$       Coefficients mapping}" near end, sloped] 
\arrow[dr, <->, "\text{$\,\,\,\,\,\,\,\,\,\,\,\,\,\,\,\,\,\,\,\,\,\,\,\,\,\,\,\,\,$ Generating function}" near start, sloped] 
& \\
\boxed{
\begin{tabular}{c}
\textbf{AFE} \\[4pt]
order $r$
\end{tabular}}
\arrow[rr, <->, "\text{$\,\,\,\,\,\,\,\,\,\,\,\,\,\,\,\,\,\,\,\,\,\,\,\,\,\,\,\,\,\,\,\,\,\,\,\,\,\,\,\,\,\,\,\,\,\,\,\,\,\,\,\,\,\,\,$ Singularity analysis of the ODE solution}" near start] 
& & 
\boxed{
\begin{tabular}{c}
\textbf{D-finite ODE} \\[4pt]
order $\leq (d+1)r$
\end{tabular}}
\end{tikzcd}
\vspace{0.5em}
\caption{The holonomic triangle. The vertices represent the P-recurrence (order $r$), the AFE (order $r$), and the D-finite ODE (order $\leq (d+1)r$ for the ordinary generating function, where $d$ is the maximum degree of polynomial coefficients in the P-recurrence). Note: the bound $(d+1)r$ is worst-case; in practice the ODE order is often much lower---for instance, our order-2 examples yield ODEs of order 1.}
\label{fig:triangle}
\end{figure}

\subsection{Concrete holonomic certificates}

The holonomic triangle is not merely a conceptual framework—it can be made explicit through concrete differential equations. We illustrate this with our order-2 examples.

\begin{proposition}[Holonomic certificate for the $\ee$-world]\label{prop:GF-ODE-e}
For each fixed parameter $z\in\C$, the generating function $U_z(x) := \sum_{n\ge1}u_n(z)x^n$ satisfies a first-order linear differential equation with rational coefficients in $x$:
\[
  (1-x)U'_z(x) = U_z(x) \left(\frac{x^2-x+2-z+zx}{x}\right) + zx.
\]
This equation, derived in Section~\ref{sec:e-world}, makes the holonomic structure explicit and verifiable.
\end{proposition}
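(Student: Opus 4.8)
The plan is to replay, with the integer parameter $m$ promoted to a complex (or indeterminate) parameter $z$, the derivation already carried out in Section~\ref{sec:e-world}, and to check that integrality of the parameter was never used. Concretely, fix $z\in\C\setminus\{-1,-2,-3,\dots\}$ so that the recurrence $u_{n+2}(z)=u_{n+1}(z)+\tfrac{u_n(z)}{n+z}$ is well defined for every $n\ge1$; a one-line induction then bounds $|u_n(z)|$ by a polynomial in $n$, so $U_z(x)=\sum_{n\ge1}u_n(z)x^n$ is holomorphic on $\{|x|<1\}$ (equivalently one may argue entirely in $\C[[x]]$). For a negative integer $z$ the recurrence degenerates, but since each $u_n(z)$ is a rational function of $z$, the asserted ODE is an identity between rational-in-$z$ families and therefore holds wherever both sides make sense; I would add one sentence to that effect.

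First I would multiply the recurrence by $x^n$ and sum over $n\ge1$. Using $u_1(z)=0$ and $u_2(z)=1$, the left-hand side becomes $x^{-2}\bigl(U_z(x)-x^2\bigr)$, the shift term becomes $x^{-1}U_z(x)$, and the weighted term is $J_z(x):=\sum_{n\ge1}\tfrac{u_n(z)}{n+z}x^n$; rearranging gives the integro-differential relation
\[
(1-x)\,U_z(x)=x^2\bigl(1+J_z(x)\bigr).
\]
Next I would record the purely formal differential identity
\[
x\,J_z'(x)+z\,J_z(x)=U_z(x),
\]
obtained termwise from $\tfrac{n}{n+z}=1-\tfrac{z}{n+z}$; this is the power-series shadow of $J_z(x)=x^{-z}\int_0^x U_z(t)\,t^{z-1}\,\dd t$ and is what lets me avoid choosing a branch of $t^{z-1}$ when $z\notin\Z$. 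Then I would differentiate the integro-differential relation in $x$, substitute $J_z=\bigl((1-x)U_z-x^2\bigr)/x^2$ and $J_z'=(U_z-zJ_z)/x$ to eliminate $J_z$ and $J_z'$, and collect the coefficient of $U_z$; the bookkeeping $1+\tfrac{2(1-x)}{x}+x-\tfrac{z(1-x)}{x}=\tfrac{x^2-x+2-z+zx}{x}$ and the constant inhomogeneity $zx$ emerge exactly, yielding the claimed first-order linear ODE.

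There is essentially no hard analytic content here: the proposition is a parameter-uniformity statement certifying that the Section~\ref{sec:e-world} computation is valid verbatim with $m\rightsquigarrow z$. The only point requiring a modicum of care — the ``main obstacle'' such as it is — is the passage from the integer computation, where $J_m$ was represented by the genuine integral $x^{-m}\int_0^x U_m(t)t^{m-1}\,\dd t$, to general $z$, where that integral needs a branch choice; this is sidestepped by working throughout with the algebraic identity $xJ_z'+zJ_z=U_z$, which holds at the level of formal (hence convergent) power series with no branch ambiguity. Once that substitution step is phrased this way, the remainder is the routine elimination already spelled out in Section~\ref{sec:e-world}.
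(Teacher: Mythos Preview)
Your proposal is correct and follows essentially the same approach as the paper's derivation in Section~\ref{sec:e-world}: multiply the recurrence by $x^n$, sum, obtain the integro-differential relation $(1-x)U_z=x^2(1+J_z)$, then differentiate and eliminate $J_z$ to arrive at the first-order ODE. Your one refinement---replacing the integral representation $J_m(x)=x^{-m}\int_0^x U_m(t)t^{m-1}\,\dd t$ by the branch-free power-series identity $xJ_z'+zJ_z=U_z$ when passing from integer $m$ to complex $z$---is a clean way to make the extension rigorous, and the paper does not spell this step out.
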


\begin{proposition}[Holonomic certificate for the $\pi$-world]\label{prop:GF-ODE-pi}
For each fixed parameter $z\in\C$, the generating function $V_z(x) := \sum_{n\ge1}v_n(z)x^n$ satisfies a first-order linear differential equation with rational coefficients in $x$:
\[
  (1-x^2)V_z'(x) + V_z(x)\left(\frac{z-2}{x}-zx-1\right) = zx.
\]
This equation, derived in Section~\ref{sec:pi-world}, provides the D-finite vertex of the holonomic triangle for $g(z)$.
\end{proposition}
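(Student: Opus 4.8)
The plan is to mirror the generating-function computation of Section~\ref{sec:e-world}, the one genuinely new feature being the index shift concealed in the term $v_{n+1}(z)/(n+z)$. Fix a complex parameter $z$ with $\Re z$ large; the variable $x$ stays in a neighbourhood of $0$ where $V_z(x)=\sum_{n\ge1}v_n(z)x^n$ converges (this holds for $z\notin\{-1,-2,\dots\}$, since $v_n(z)$ grows polynomially), and the resulting polynomial-in-$x$ identity, whose coefficients are rational in $x$ and polynomial in $z$, extends to all $z\in\C$ by analytic continuation. Multiply the parametrised recurrence $v_{n+2}(z)=\frac{v_{n+1}(z)}{n+z}+v_n(z)$ by $x^n$ and sum over $n\ge1$. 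Using $v_1(z)=0$ and $v_2(z)=1$, the left-hand side collapses to $x^{-2}\bigl(V_z(x)-x^2\bigr)$, and the term $\sum_{n\ge1}v_n(z)x^n$ is just $V_z(x)$. For the middle term, reindex by $k=n+1$; because $v_1(z)=0$ the $k=1$ contribution vanishes, so $\sum_{n\ge1}\frac{v_{n+1}(z)}{n+z}x^n=x^{-1}\widetilde J_z(x)$ with $\widetilde J_z(x):=\sum_{k\ge1}\frac{v_k(z)}{k+z-1}x^k$.

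Next I would realise $\widetilde J_z$ as an integral operator applied to $V_z$: from $\frac{x^{k+z-1}}{k+z-1}=\int_0^x t^{k+z-2}\,\dd t$ one gets $\widetilde J_z(x)=x^{1-z}\int_0^x t^{z-2}V_z(t)\,\dd t$, the integral being harmless at $t=0$ because $V_z(t)=t^2+O(t^3)$ forces $t^{z-2}V_z(t)=t^{z}+O(t^{z+1})$. Assembling the three pieces and clearing $x^{2}$ yields the integro-differential relation
\[
(1-x^2)\,V_z(x)=x^2+x\,\widetilde J_z(x).
\]
Differentiating the integral representation gives the companion identity $x\,\widetilde J_z'(x)=(1-z)\,\widetilde J_z(x)+V_z(x)$, while the displayed relation itself gives $x\,\widetilde J_z(x)=(1-x^2)V_z(x)-x^2$.

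Finally, differentiate the displayed relation in $x$ and substitute both of the last two identities to eliminate $\widetilde J_z$ and $\widetilde J_z'$. Multiplying through by $x$ and collecting terms, the coefficient of $V_z(x)$ simplifies to $zx^2+x+(2-z)$ and the inhomogeneous term to $zx^2$, so that
\[
x(1-x^2)\,V_z'(x)=V_z(x)\bigl(zx^2+x+2-z\bigr)+zx^2,
\]
which, divided by $x$, is precisely the asserted first-order linear ODE with rational coefficients in $x$ (and an integrating factor $\mu(x)=x^{z-2}(1-x)^{3/2}(1+x)^{1/2}$ then recovers the closed form of Section~\ref{sec:pi-world}). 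The only delicate point — and the one I would flag as the main obstacle — is the bookkeeping at the exceptional parameters $z\in\{0,-1,-2,\dots\}$, where the formal denominator $k+z-1$ vanishes for one index $k$ and $\widetilde J_z$ must be read through analytic continuation in $z$; this is exactly the source of the $\delta_{z,0}$ term appearing in the closed-form solution. Since the derivation is valid on the open set $\{\Re z\ \text{large}\}$ and the final ODE is an identity of functions analytic in $x$ near $0$ with coefficients rational in $x$ and entire in $z$, continuation in $z$ disposes of these cases and no genuine difficulty remains.
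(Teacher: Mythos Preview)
Your derivation is correct and follows precisely the approach the paper indicates in Section~\ref{sec:pi-world} (``following analysis similar to Section~\ref{sec:e-world}''): form the integro-differential relation $(1-x^2)V_z(x)=x^2+x\,\widetilde J_z(x)$, differentiate, and eliminate $\widetilde J_z$ using $x\widetilde J_z'=(1-z)\widetilde J_z+V_z$. In fact you supply considerably more detail than the paper itself, which simply states the ODE; the only adaptation from the $\ee$-world template---shifting the denominator to $k+z-1$ because the recurrence involves $v_{n+1}/(n+z)$ rather than $v_n/(n+z)$---is handled correctly.
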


These explicit ODEs make the connection between the three vertices of the holonomic triangle completely constructive. Given the P-recurrences for $u_n(m)$ and $v_n(m)$, standard generating function manipulations yield these ODEs. Conversely, singularity analysis of the ODE solutions recovers the coefficient asymptotics and the AFEs for $f(z)$ and $g(z)$.

\begin{remark}[Algorithmic verification]
Modern computer algebra systems (e.g., Maple with \texttt{gfun}, Mathematica, or SageMath) can verify these certificates algorithmically: starting from the P-recurrences, elimination techniques produce the ODEs; solving the ODEs and extracting singularity expansions confirms the asymptotic formulas; and the AFEs can be verified numerically or symbolically using the explicit formulas \eqref{eq:f-explicit} and \eqref{eq:g-explicit}.
\end{remark}

\begin{figure}[ht]
\centering
\begin{tikzcd}[row sep=2.5cm, column sep=2.3cm]
  & {\boxed{\begin{array}{c}
      \text{\bfseries P-recurrence}\\[4pt]
      w_{n+1}(z)=\dfrac{n+1}{n+z}\,w_n(z)\\[2pt]
      w_1(z)=1
    \end{array}}}
    \arrow[dl,<->,sloped,"{\,z\;\leftrightarrow\;\dfrac{n+1}{n+z}\,}"{pos=.5}]
    \arrow[dr,<->,sloped,"{\,W(x,z)=\sum_{n=1}^\infty w_n(z)\,x^n\,}"{pos=.5}]
  & \\
  {\boxed{\begin{array}{c}
      \text{\bfseries AFE}\\[4pt]
      \Gamma(z+1)=z\,\Gamma(z)\\[2pt]
      \Gamma(1)=1
    \end{array}}}
    \arrow[rr,<->,
      "{\substack{\text{Singularity analysis of }\\[2pt]
      W(x,z)=\,z\bigl({}_2F_1(1,1;z;x)-1\bigr)}}"{pos=.5}]
  & &
  {\boxed{\begin{array}{c}
      \text{\bfseries D\text{-}finite ODE}\\[4pt]
      (1-x)W' + \dfrac{z-1-x}{x}\,W = z
    \end{array}}}
\end{tikzcd}
\vspace{0.5em}
\caption{The holonomic triangle for Euler's Gamma function, illustrating the three-way equivalence between P-recurrence, AFE, and D-finite ODE under the integer slope condition.}
\label{fig:gamma-triangle}
\end{figure}
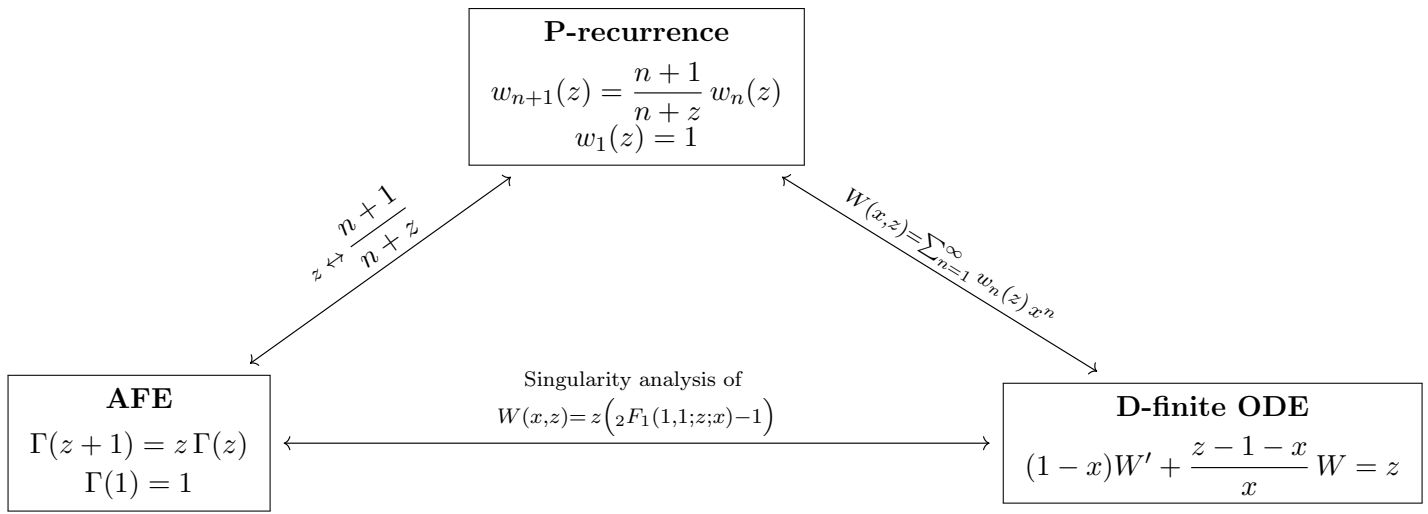

Euler's Gamma function provides the concrete order-1 illustration of this triangle (Figure~\ref{fig:gamma-triangle}). Our examples $f(z)$ and $g(z)$ have trivial integer slopes since $\Lambda(n,z) = n$ and $\Lambda(n,z) = \sqrt{n}$ involve no Gamma factors with $z$-dependence.

\section{Concluding remarks}

This work establishes mirror symmetry between recurrences connecting $\ee$ and $\pi$. Through parametrization and asymptotic analysis, connection constants extend to meromorphic functions satisfying additive functional equations with rational coefficients. The arithmetic duality \eqref{eq:duality} between $\Z$-linear forms in $\ee$ and $\Q$-linear forms in $\pi$ provides the motivating structure.

We constructed order-2 AGFs $f(z)$ and $g(z)$, recognizing Euler's Gamma function as the order-1 prototype. The Integer Slope Condition (Lemma~\ref{lem:integer-slope}) ensures rational coefficients, and Theorem~\ref{thm:AGF-equivalence} establishes the equivalence between AGFs and connection constants from integer-slope P-recurrences. The explicit formulas \eqref{eq:f-explicit} and \eqref{eq:g-explicit} reveal the regular/irregular dichotomy.

\subsection{Open questions}

The integer slope condition delineates where rational-coefficient AFEs emerge. For sequences with non-integer slopes or non-factorisable asymptotics, whether functional equations exist (and with what coefficients) remains open; beyond this boundary, transcendental coefficients would likely be required. Several directions merit further investigation:

\begin{enumerate}
\item \textbf{Classification at order 2.} Systematic study of order-2 AGFs and their connection to Heun's differential equations \cite{Ronveaux1995} could illuminate Diophantine properties of exotic constants.

\item \textbf{Extensions.} Natural generalizations include: (a) logarithmic asymptotics when characteristic roots coincide; (b) $q$-analogues where Jacobi theta functions replace Gamma functions.

\item \textbf{Applications.} Black hole quasi-normal modes \cite{Leaver1985} yield three-term recurrences; AGF methods might provide analytical tools if integer slopes appear. Higher-order P-recurrences ($r \geq 3$) could yield linear forms with better Diophantine control.
\end{enumerate}

\end{document}